\documentclass[10pt]{article}
\usepackage{amsfonts}
\usepackage{amsmath}
\usepackage{mathrsfs}
\usepackage{mathrsfs, amscd,amssymb,amsthm,amsmath,bm,graphicx,psfrag,subfigure}

\setlength{\evensidemargin}{-2.4cm} \setlength{\oddsidemargin}{-5mm}
\setlength{\textwidth}{17cm} \setlength{\textheight}{23cm}
\setlength{\headsep}{0.1cm}

\makeatletter

\renewcommand{\@seccntformat}[1]{{\csname the#1\endcsname}{\normalsize .}\hspace{.5em}}
\makeatother

\def \[{\begin{equation}}
\def \]{\end{equation}}
\newtheorem{thm}{Theorem}[section]

\newtheorem{claim}{Claim}
\newtheorem{remark}{Remark}
\newtheorem{lem}[thm]{Lemma}
\newtheorem{cor}[thm]{Corollary}

\newenvironment{kst}
{\setlength{\leftmargini}{2\parindent}
 \begin{itemize}
 \setlength{\itemsep}{-1.1mm}}
{\end{itemize}}

\newenvironment{wst}
{\setlength{\leftmargini}{1.5\parindent}
 \begin{itemize}
 \setlength{\itemsep}{-1.1mm}}
{\end{itemize}}

\begin{document}
\setlength{\baselineskip}{16pt}
\begin{center}{\Large \bf Further analysis on the total number of subtrees of trees\footnote{Financially supported by
the National Natural Science Foundation of China (Grant No. 11071096) and the Special Fund for Basic Scientific Research of Central Colleges (CCNU11A02015).}}

\vspace{2mm}

{\large Shuchao Li\footnote{E-mail: lscmath@mail.ccnu.edu.cn (S.C.
Li), wang06021@126.com (S.J. Wang)},\ Shujing Wang}\vspace{2mm}

{\small Faculty of Mathematics and Statistics,  Central China Normal
University, Wuhan 430079, P.R. China}\vspace{1mm}
\end{center}
\vspace{2mm}

\noindent {\bf Abstract}: We study that over some types of trees with a given number of vertices, which trees
minimize or maximize the total number of subtrees. Trees minimizing (resp. maximizing) the total number of subtrees
usually maximize (resp. minimize) the Wiener index, and vice versa. Here are some of our results:\ (1)\, Let $\mathscr{T}_n^k$
be the set of all $n$-vertex trees with $k$ leaves, we determine the maximum (resp. minimum) value of the total number of subtrees of trees
among $\mathscr{T}_n^k$ and characterize the extremal graphs.\ 
(2)\, Let $\mathscr{P}_n^{p,q}$ be the set of all $n$-vertex trees, each of which has a $(p,q)$-bipartition, we determine
the maximum (resp. minimum) value of the total number of subtrees of trees among $\mathscr{P}_n^{p,q}$ and characterize the extremal graphs.\ (3)\, Let $\mathscr{A}_n^q$ be the set of all $q$-ary trees with $n$ non-leaf vertices, we determine the minimum value of the total number of subtrees of trees
among $\mathscr{A}_n^q$ and identify the extremal graph.

\vspace{2mm} \noindent{\it Keywords}: Subtrees;
Leaves; Diameter; Bipartition; Wiener index; $q$-ary tree

\vspace{2mm}

\noindent{AMS subject classification:} 05C05,\ 05C10

\vspace{4mm}

 {\setcounter{section}{0}
\section{\normalsize Introduction}

We consider only simple connected graphs (i.e. finite, undirected graphs
without loops or multiple edges).  Let $G=(V_G, E_G)$ be a graph
with $u,v\in V_G$, $d_G(u)$ (or $d(u)$ for short) denotes the
degree of $u$; the \textit{distance} $d_G(u,v)$ is defined as the length of the shortest path between
$u$ and $v$ in $G$; $D_G(v)$ denotes the sum
of all distances from $v$. The \textit{eccentricity} $\varepsilon(v)$ of a
vertex $v$ is the maximum distance from $v$ to any other vertex.

Throughout the text we denote by $P_n,\, K_{1,n-1}$ the path and
star on $n$ vertices, respectively. $G-v,\, G-uv$ denote
the graph obtained from $G$ by deleting vertex $v \in V_G$, or edge
$uv \in E_G$, respectively (this notation is naturally extended if
more than one vertex or edge is deleted). Similarly,
$G+uv$ is obtained from $G$ by adding vertex
edge $uv \not\in E_G$. For $v\in V_G,$ let
$N_G(v)$ (or $N(v)$ for short) denote the set of all the adjacent vertices of $v$ in $G.$
The \textit{diameter} diam$(G)$ of a graph is the maximum
eccentricity of any vertex in the graph.
We refer to vertices of degree 1 of a tree $T$ as \textit{leaves}
(or \textit{pendant vertices}), and the edges incident to leaves are called \textit{pendant edges}. The unique path connecting two vertices $v, u$ in $T$
will be denoted by $P_T(v, u)$. For a tree $T$ and two vertices $v, u$ of $T$, the \textit{distance} $d_T(v, u)$ between them counts the number
of edges on the path $P_T(v, u)$.

Let
$$
W(T)=\frac{1}{2}\sum_{v\in V_T}D_T(v)
$$
denote the \textit{Wiener index} of $T,$ which is the sum of distances of all unordered pairs of
vertices. This topological index was introduced by Wiener \cite{14}, which has been one of the most widely used descriptors in quantitative structure-activity relationships. Since the majority of the chemical applications of the Wiener index deal with chemical compounds with acyclic molecular
graphs, the Wiener index of trees has been extensively studied over the past years; see \cite{15,3G,4,5,24} and the references there for
details.


Given a tree $T$, a \textit{subtree} of $T$ is just a connected induced subgraph of $T$. The number of subtrees as well as related subjects
has been studied. Let $T$ denote a tree with $n$ nodes each of whose non-pendant vertices has degree at least three, Andrew and Wang \cite{17} showed that the average number of nodes in the subtrees of $T$ is at least $\frac{n}{2}$ and strictly less than $\frac{3n}{4}$. Sz\'ekely and Wang \cite{20} characterized the binary trees with $n$ leaves that have the greatest number of subtrees. Kirk and Wang \cite{19} identified the tree, for a given size and such that the vertex degree is bounded, having the greatest number of subtrees.  Sz\'ekely and Wang \cite{21} gave a formula for the maximal number of subtrees a binary tree can possess over a given number of vertices.  They also show that caterpillar trees (trees containing a path such that each vertex not belonging to the path is adjacent to a vertex on the path) have the smallest number of subtrees among binary trees. Yan and Ye \cite{25} characterized the tree with the diameter at least $d$, which has the maximum number of subtrees, and they characterized the tree with the maximum degree at least $\Delta$, which has
the minimum number of subtrees. For some related results on the enumeration of subtrees of trees, one may also see Sz\'{e}kely and Wang \cite{02,03} and Wang \cite{12}. Consider the collection of rooted labeled trees with $n$ vertices, Song \cite{S-C-W} derived a closed formula for the number of these trees in which the child of the root with the smallest label has a total of $p$ descendants. He also derived a recurrence relation for the number of these trees with the property that for each non-terminal vertex $v$, the child of $v$ with the smallest label has no descendants.

It is well known that the Wiener index is maximized by the path and minimized by the star among general trees with
the same number of vertices. It is also known that the counterparts of these simple results for
the number of subtrees do exist. In fact, 
it is interesting that the Wiener index and the total number of subtrees of a tree share exactly the same extremal structure (i.e. the tree that maximizes/minimizes the corresponding index) among trees with a given number of vertices and maximum degree, although the values of the indices are in no general functional correspondence. On the other hand, an acyclic molecule can be expressed by a tree in
quantum chemistry (see \cite{3G}). Obviously, the number of subtrees of a tree can be regarded as a topological index.
Hence, Yan and Ye \cite{25} pointed out that to explore the role of the total number of subtrees in quantum chemistry is an interesting topic.  Motivated by the work of \cite{19,20,21,17,25}, in this paper we continue to study some types of trees which minimize or maximize the total number of subtrees.

Let $\mathscr{T}_n^k$ be the set of all $n$-vertex trees with $k$
leaves  ($2\leq k\leq n-1$). A \textit{spider} is a tree with at most one vertex of degree more than 2, called the \textit{center} of the spider (if no vertex of degree
more than two, then any vertex can be the center). A \textit{leg} of a spider is a path from the center to a vertex of degree 1.
Let $T_n^k$ be an $n$-vertex tree with $k$ legs satisfying all the lengths of $k$ legs, say $l_1, l_2, \ldots, l_k$, are almost equal lengths, i.e.,
$|l_i-l_j|\le 1$ for $1 \le i, j \le k.$ 
It is easy to see that $T_n^k\in \mathscr{T}_n^k$ and $l_i+l_j \in \{2¡¤\lfloor \frac{n-1}{k}
\rfloor , \lfloor \frac{n-1}{k} \rfloor + \lceil \frac{n-1}{k} \rceil ,
2¡¤\lceil \frac{n-1}{k} \rceil \}$, where $1 \le i, j \le k.$ Let $P_k(a,b)$ be a tree obtained by attaching $a$ and $b$ pendant vertices to the two pendant vertices of $P_k$, respectively. In particular, if $k=1$, then $P_k(a,b)=K_{1,a+b}$. It is straightforward to check that $P_{n-k}(\lfloor\frac{k}{2}\rfloor,\lceil\frac{k}{2}\rceil) \in \mathscr{T}_n^k$. It is known that the Wiener index among $n$-vertex trees with $k$ pendant vertices is minimized by $T_n^k$ and is maximized by $P_{n-k}(\lfloor\frac{k}{2}\rfloor,\lceil\frac{k}{2}\rceil)$; see Dobrynin, Entringer, Gutman \cite{15}.
We are going to show the counterparts of these results for the number of subtrees, which also gives a confirm answer for a conjecture proposed by Sz\'{e}kely and Wang in \cite{21}.
\begin{thm}
Among ${\mathscr T}_n^k$ with $n\ge 2$.
\begin{wst}
\item[{\rm (i)}]Precisely the graph $T_n^k$, which has
$(\lfloor\frac{n-1}{k}\rfloor+1)^i(\lceil\frac{n-1}{k}\rceil+1)^j+i{\lfloor\frac{n-1}{k}\rfloor+1 \choose{2}}+j{\lceil\frac{n-1}{k}\rceil+1\choose{2}}$ subtrees,
 maximizes the total number of subtrees among $\mathscr{T}_n^k$, where $i+j=k$ and $n-1\equiv j\pmod{k}$.
\item[{\rm (ii)}]Precisely the graph $P_{n-k}(\lfloor\frac{k}{2}\rfloor,\lceil\frac{k}{2}\rceil)$, which has
$(2^{\lfloor\frac{k}{2}\rfloor}+2^{\lceil\frac{k}{2}\rceil})(n-k-1)+2^k+k+{n-k-1\choose{2}}$ subtrees,
 minimizes the total number of subtrees among $\mathscr{T}_n^k$.
\end{wst}
\end{thm}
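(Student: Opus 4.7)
My plan rests on the vertex-rooted product formula: for any vertex $v$ of a tree $T$ with branches $B_1,\ldots,B_d$ hanging off $v$ via edges to roots $r_1,\ldots,r_d$, the number of subtrees containing $v$ is $f_v(T)=\prod_{i=1}^{d}(1+f_{r_i}(B_i))$, where $f_x(H)$ denotes the number of subtrees of $H$ containing $x$; the total count is $f(T)=f_v(T)+\sum_i f(B_i)$. The strategy in each part is to design a local transformation that preserves the leaf count $k$, verify that it strictly changes $f$ in the desired direction, iterate to reach the claimed extremum, and then read off the closed formula by direct enumeration.

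For part (i), I would first reduce to a spider. If $T\in\mathscr{T}_n^k$ has at least two branch vertices, the branch vertices form a subtree, so there is a branch vertex $u$ all of whose branches are pendant paths except one, which leads to a second branch vertex $v$. Detach one pendant path $B$ from $u$ and re-attach it as a pendant path at $v$; the leaf count is preserved, and a short comparison via the product formula at $u$ and $v$ gives $f(T')>f(T)$. Iteration eliminates all but one branch vertex, so the maximizer is a spider. Among spiders with $k$ legs of lengths $l_1,\ldots,l_k$ summing to $n-1$, the subtree count equals $\prod_i(l_i+1)+\sum_i\binom{l_i+1}{2}$; a balance swap $(l_a,l_b)\mapsto(l_a+1,l_b-1)$ with $l_b>l_a+1$ increases the product by $(l_b-l_a-1)\prod_{m\ne a,b}(l_m+1)$ and decreases the binomial sum by $l_b-l_a-1$. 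Since $k\ge 3$ forces $\prod_{m\ne a,b}(l_m+1)\ge 2$, the net change is strictly positive, so the extremum is the balanced spider $T_n^k$, and the stated formula follows from this count.

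For part (ii), I follow two analogous steps. First, reduce to a caterpillar: if $T$ is not a caterpillar, the trunk $T_0=T\setminus\{\mathrm{leaves}\}$ is not a path, so some trunk vertex $w$ has $T_0$-degree $\ge 3$; detaching a suitable branch at $w$ and regrafting it onto an endpoint of a longest trunk path strictly decreases $f$ by the product formula applied at $w$ and at the endpoint. Second, concentrate the leaves at the two spine endpoints: if some interior spine vertex carries a leaf, sliding that leaf to a spine endpoint strictly decreases $f$ via the same sort of cofactor comparison. Once all leaves sit at the two endpoints in multiplicities $a,b$ with $a+b=k$ and $a,b\ge 1$, a direct enumeration by where the maximal spine-subsegment of a subtree lies gives exactly $(2^a+2^b)(n-k-1)+2^{a+b}+k+\binom{n-k-1}{2}$ subtrees, and the convexity of $2^a+2^{k-a}$ places the minimum at $a=\lfloor k/2\rfloor$, $b=\lceil k/2\rceil$, so the minimizer is $P_{n-k}(\lfloor k/2\rfloor,\lceil k/2\rceil)$.

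The main obstacle will be verifying strict monotonicity in the two reduction transformations, especially the caterpillar reduction in part (ii), since a naive choice of which branch to detach or which endpoint to receive it can actually increase the subtree count in some configurations. The cleanest way to handle this is to fix the cut-edge $e=xy$ in advance and use the identity $f(T)=f(T-e)+f_x(T_x)f_y(T_y)$ for the components $T_x,T_y$ of $T-e$: the transformation then only alters the cofactor product $f_x(T_x)f_y(T_y)$, and the desired inequality reduces to a comparison of two explicit integer products. Once the transformations are established, the closed-form counts verify by routine expansion.
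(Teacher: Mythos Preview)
Your overall plan---local transformations that preserve the leaf count, followed by direct enumeration---is the paper's plan too, and your balancing arguments (the swap $(l_a,l_b)\mapsto(l_a+1,l_b-1)$ for spider legs, the convexity of $2^a+2^{k-a}$ for the double-broom) are correct and in fact a bit slicker than the paper's use of Lemma~\ref{lem2.9}.

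The gap is in the spider reduction for part (i). Your claim that detaching a pendant path from a ``leaf'' branch vertex $u$ and re-attaching it at the next branch vertex $v$ always increases $F$ is false. Take $u$ adjacent to $v$, give $u$ ten pendant paths of length~$5$, and give $v$ two pendant leaves. With $A=T\setminus B_1$ (one of the paths at $u$ removed), your own cut-edge identity gives
\[
F(T')-F(T)=f_{r_1}(B_1)\bigl(f_A(v)-f_A(u)\bigr),
\]
and here $f_A(u)=5\cdot 6^{9}>4(1+6^{9})=f_A(v)$, so $F(T')<F(T)$. The point is that a ``leaf'' branch vertex can still dominate the tree if it carries enough mass, and then one must move mass \emph{toward} it, not away. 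The paper resolves this with Lemma~\ref{lem2.8}: for any two vertices $u,v$ of degree at least~$3$, moving all side branches from $u$ to $v$ \emph{or} from $v$ to $u$ strictly increases $F$, and one of the two directions always works (compare $f_{\hat T}(u)$ with $f_{\hat T}(v)$ in the common skeleton $\hat T$, then invoke Lemma~\ref{lem2.7} and Corollary~\ref{cor2.4}). That two-sided lemma is the missing ingredient; without it your iteration can stall.

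The same directional issue recurs in your part (ii) reductions, as you rightly anticipate, and the cut-edge identity alone does not tell you which product is larger. The paper's device there is different and clean: along any path $v_1\cdots v_r$ in a tree the sequence $f_T(v_i)$ is strictly unimodal (Lemma~\ref{lem2.10}). So if an interior vertex $v_i$ of a longest path carries extra neighbours and the peak of $f_{T_0}$ sits at $v_t$, then $f_{T_0}(v_i)>f_{T_0}(v_2)$ when $t\ge i$ and $f_{T_0}(v_i)>f_{T_0}(v_{r-1})$ when $t<i$; either way Corollary~\ref{cor2.4} pushes the extra neighbours to $v_2$ or $v_{r-1}$ with a strict decrease in $F$, landing directly on $P_{n-k}(a,b)$ without a separate caterpillar step.
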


Let $\mathscr{T}_{n,d}$ denote the set of all $n$-vertex trees of
diameter $d$. Let $\hat{T}_{n,k}^d$ be the $n$-vertex tree obtained
from $P_{d+1}=v_1v_2\ldots v_dv_{d+1}$ by attaching $n-d-1$ pendant
edges to $v_k$; see Fig. 1.
\begin{figure}[h!]
\begin{center}
\psfrag{a}{$u_1$}\psfrag{b}{$u_2$}\psfrag{1}{$v_1$}\psfrag{2}{$v_2$}
\psfrag{3}{$v_{k-1}$}\psfrag{4}{$v_k$}\psfrag{5}{$v_{k+1}$}\psfrag{6}{$v_d$}\psfrag{7}{$v_{d+1}$}
  \psfrag{c}{$u_{n-d-1}$}\psfrag{d}{$d$}\psfrag{e}{$k-1$}
  \psfrag{f}{$k+1$}\psfrag{g}{$d+1$}
  \includegraphics[width=60mm]{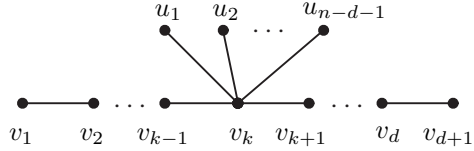}\\
  \caption{Tree $\hat{T}_{n,k}^d.$ }
\end{center}
\end{figure}
\begin{thm}
For any $n\ge 2$, precisely the graph $\hat{T}_{n,i}^d$, which has
$$2^{n-d-1}\left(\left\lfloor \frac{d}{2}\right \rfloor+1\right)\left(\left\lceil \frac{d}{2} \right\rceil+1\right)+ {\lfloor \frac{d}{2} \rfloor+1\choose{2}}{\lceil \frac{d}{2} \rceil+1\choose{2}}+n-d-1$$ subtrees,
 maximizes the total number of subtrees among $\mathscr{T}_{n,d}$, where $i=\lfloor\frac{d}{2}\rfloor+1$ or $i=\lceil\frac{d}{2}\rceil+1$.
\end{thm}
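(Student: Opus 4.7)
Fix any $T\in\mathscr{T}_{n,d}$ with a diametral path $P=v_1v_2\cdots v_{d+1}$; since $v_1,v_{d+1}$ are necessarily leaves, every non-path branch of $T$ sits at an interior vertex $v_i$ with $2\le i\le d$. The plan is to find, for any $T$ different from the central candidates $\hat T_{n,\lfloor d/2\rfloor+1}^d$ and $\hat T_{n,\lceil d/2\rceil+1}^d$, a diameter-preserving local move that strictly increases $\sigma$, via three reduction steps.

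\emph{(Step 1: Compress branches.)} Let $B$ be a branch of $T$ at an interior $v_i$ with $|V(B)|=m\ge 2$, and let $T'$ be obtained by replacing $B$ with $m$ pendants attached directly to $v_i$. Using the multiplicative identity $\sigma_{v_i}(T)=\prod_j(1+\sigma_{u_j}(B_j))$ together with the classical inequality $\sigma_r(R)\le 2^{|R|-1}$ for any rooted tree $R$ (equality iff $R$ is a star at $r$, proved by a one-line induction on $|R|$), one obtains $\sigma(T')-\sigma(T)\ge (M-1)(2^{m-1}-1)>0$, where $M\ge 4$ is the product of the branch factors coming from the two path neighbours of $v_i$. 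Iteration reduces $T$ to a caterpillar: the path $P$ together with $n-d-1$ pendant edges distributed among $v_2,\ldots,v_d$.

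\emph{(Step 2: Consolidate pendants.)} Classifying subtrees of the caterpillar by the subpath $[v_s,v_t]$ they meet on $P$ and the pendant subset they contain,
\[
\sigma(T)=(n-d-1)+\sum_{1\le s\le t\le d+1}2^{P(s,t)},\qquad P(s,t)=\sum_{k:\,s\le a_k\le t}p_k,
\]
where $v_{a_1},\dots,v_{a_r}$ are the pendant positions with multiplicities $p_1,\dots,p_r$. For any two such positions $v_{a_i},v_{a_j}$ with $a_i<a_j$ and $p_i,p_j\ge 1$, merging all pendants of one bunch into the other gives
\[
\Delta_{i\to j}=(2^{p_i}-1)(2^{p_j}A-B),\qquad \Delta_{j\to i}=(2^{p_j}-1)(2^{p_i}B-A),
\]
where $A$ (resp.\ $B$) sums $2^{Q(s,t)}$ over subpaths containing only $v_{a_j}$ (resp.\ only $v_{a_i}$) out of $\{v_{a_i},v_{a_j}\}$, with $Q$ counting pendants at the remaining locations. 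These two quantities cannot both be non-positive, for $2^{p_j}A\le B$ and $2^{p_i}B\le A$ would force $2^{p_i+p_j}\le 1$. Hence some merging move strictly improves $\sigma$ and reduces the number of pendant locations by one; iteration terminates at $\hat T_{n,k}^d$.

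\emph{(Step 3 and main obstacle.)} The same enumeration evaluates
\[
\sigma(\hat T_{n,k}^d)=k(d+2-k)\,2^{n-d-1}+\binom{k}{2}+\binom{d+2-k}{2}+(n-d-1),
\]
and the identity $\binom{k}{2}+\binom{d+2-k}{2}=\binom{d+2}{2}-k(d+2-k)$ rewrites this as $k(d+2-k)(2^{n-d-1}-1)+\binom{d+2}{2}+(n-d-1)$, which for $n\ge d+2$ is strictly increasing in the concave quadratic $k(d+2-k)$; its integer maxima on $\{2,\ldots,d\}$ occur exactly at $k=\lfloor d/2\rfloor+1$ and $k=\lceil d/2\rceil+1$, and substitution yields the announced formula. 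The delicate point throughout is Step 2: no fixed-direction move is monotonically beneficial, and the crucial algebraic observation is that the antisymmetric pair $\Delta_{i\to j},\Delta_{j\to i}$ cannot both be non-positive, guaranteeing that some merge always strictly increases $\sigma$. Steps 1 and 3 are routine once one has the classical $\sigma_r\le 2^{|R|-1}$ bound and performs the concavity comparison above.
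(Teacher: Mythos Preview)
Your proof is correct and takes a genuinely different route from the paper. The paper does not prove the extremality at all: it simply invokes Theorem~3.7 of Yan--Ye~\cite{25} for the statement that $\hat T^d_{n,i}$ is the unique maximizer, and then computes $F(\hat T^d_{n,i})$ by the one--line decomposition $F=f_{\hat T}(x)+F(\hat T-x)$ with $x=v_{\lfloor d/2\rfloor+1}$. Your argument, by contrast, is self-contained: Step~1 reduces to caterpillars, Step~2 reduces to a single pendant location via the neat observation that $\Delta_{i\to j}$ and $\Delta_{j\to i}$ cannot both be non-positive, and Step~3 optimizes the location by the concavity of $k(d+2-k)$. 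The paper's approach is shorter because it outsources the hard part; yours actually explains why the extremal shape arises, and the antisymmetric-pair trick in Step~2 is a reusable device (essentially a discrete version of the switching arguments in Lemmas~\ref{lem2.3}--\ref{cor2.4} of the paper, but carried out explicitly for caterpillars).

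Two small remarks. First, to obtain the bound $(M-1)(2^{m-1}-1)$ in Step~1 you need, in addition to $\sigma_u(B)\le 2^{m-1}$, the companion inequality $\sigma(B)\le 2^{m-1}+m-1$ (the star maximizes the \emph{total} subtree count), since
\[
\sigma(T')-\sigma(T)=M\bigl(2^m-1-\sigma_u(B)\bigr)-\bigl(\sigma(B)-m\bigr);
\]
you should state this explicitly, though it is equally classical. Second, your closed form $k(d+2-k)2^{n-d-1}+\binom{k}{2}+\binom{d+2-k}{2}+(n-d-1)$ is the correct one; the product $\binom{\lfloor d/2\rfloor+1}{2}\binom{\lceil d/2\rceil+1}{2}$ appearing in the theorem statement (and repeated in the paper's own computation of $F(\hat T^d_{n,i}-x)$) is a typo for the sum, as one checks already for $d=2$, $n=3$.
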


Let $G$ be a connected bipartite graph with $n$ vertices. Hence its vertex set can be partitioned into
two subsets $V_1$ and $V_2$, such that each edge joins a vertex in $V_1$ with a vertex in $V_2$. Suppose that $V_1$ has
$p$ vertices and $V_2$ has $q$ vertices, where $p+q = n$. Then we say that $G$ has a $(p, q)$-\textit{bipartition} $(p \le q)$.
Denote by $\mathscr{P}_n^{p,q}$ the class of trees with $n$ vertices, each of which has a $(p, q)$-bipartition $(p + q = n)$.
Consider a star $K_{1,p}$ with $p + 1$ vertices and attach $q-1$ pendant edges to a non-central vertex of the
star $K_{1,p}$. The resulting tree with $p + q$ vertices has a $(p, q)$-bipartition. Denote the resulting tree by
$D(p, q)$; see Fig. 2. Obviously, $D(p, q)\in \mathscr{P}_n^{p,q}$. We call $D(p, q)$ a \textit{double star}. If $q \geq p \geq 3$, suppose
that $B(p, q)$ is the tree obtained from $D(p-1, q)$ by attaching a pendant edge to one of the vertices of
degree one which join the vertex of degree $q$ in $D(p- 1, q)$ (see Fig. 2). If $q \ge p = 2$, we assume that
$B(2, q)$ is the tree obtained from the path $P_4$ by attaching $q- 2$ pendant edges to an end vertex of $P_4$
(see Fig. 2).
\begin{figure}[h!]
\begin{center}
\psfrag{a}{$p-1$}\psfrag{b}{$q-1$}\psfrag{c}{$p-2$}\psfrag{d}{$q-1$}\psfrag{e}{$q-2$}
\psfrag{A}{$D(p, q)$}\psfrag{B}{$B(p,q)$}\psfrag{C}{$B(2,q)$}
  \includegraphics[width=140mm]{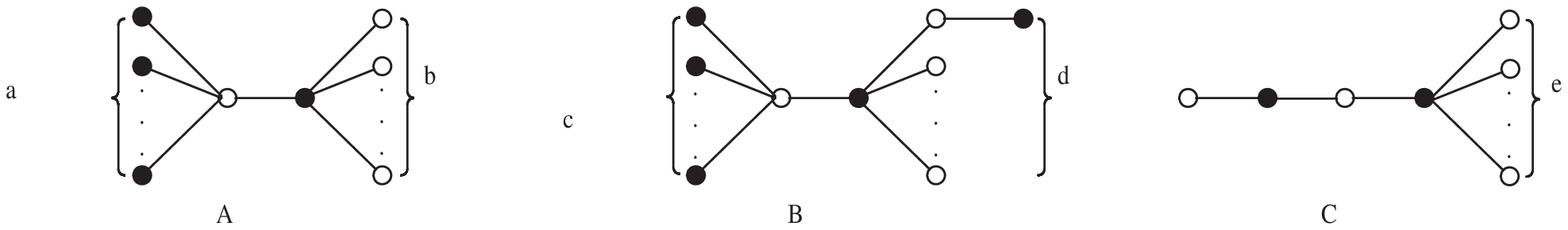}\\
  \caption{Trees $D(p, q), B(p, q)\,(p\ge q\ge 3)$ and $B(2, q).$ }
\end{center}
\end{figure}

\begin{thm}
For any $n\ge 2$.
\begin{kst}
\item[{\rm (i)}]Precisely the graph $D(p, q) \  (q\ge p\ge 1)$, which has
$2^{n-2}+2^{p-1}+2^{q-1}+n-2$ subtrees,
maximizes the total number of subtrees among $\mathscr{P}_n^{p,q}$.

\item[{\rm (ii)}]Precisely the graph $B(p, q) \  (q\ge p\ge 2)$, which has
$$\text{$3\cdot 2^{n-4}+3\cdot 2^{q-2}+2^{p-2}+n-1,$ if $p>2$ and $2^{n-2}+n+2$, otherwise}$$
subtrees, maximizes the total number of subtrees
among $\mathscr{P}_n^{p,q}\setminus \{D(p,q)\}.$
\item[{\rm (iii)}]Precisely the graph $P_{2p-1}(\lfloor\frac{n-2p+1}{2}\rfloor,\lceil\frac{n-2p+1}{2}\rceil)$, which has
$$(2p-1)\left(2^{\lfloor\frac{n-2p+1}{2}\rfloor}+2^{\lceil\frac{n-2p+1}{2}\rceil}\right)+{n-2p+1\choose{2}}+2^{n-2p+1}+n-2p+1$$ subtrees,
 minimizes the total number of subtrees among $\mathscr{P}_n^{p,q}$.
\end{kst}
\end{thm}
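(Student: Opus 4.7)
Write $\eta(T)$ for the number of subtrees of $T$ and $\eta(T;v)$ for the number of subtrees containing $v$. The multiplicative rule $\eta(T;v)=\prod_{c}(1+\eta(T_c;c))$, with the product over the children of $v$ under a rooting at $v$ and $T_c$ denoting the branch at $c$, lets us compare $\eta$ before and after a local surgery. My plan is to develop bipartition-preserving surgeries that strictly increase $\eta$ for the upper bounds in (i) and (ii), and a different family that strictly decreases $\eta$ for the lower bound in (iii); then to iterate each family to the asserted extremal tree and compute its value.

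For parts (i) and (ii) the key move is \emph{leaf concentration}: if $T\in\mathscr{P}_n^{p,q}$ has two distinct vertices $x,y$ in the same color class each carrying pendant neighbors, detach the pendants of $x$ and reattach them at $y$. Since the reattached leaves were in the class opposite $x$, they are still in the class opposite $y$, so the bipartition is preserved. Applying the product rule at $y$ together with an additive accounting of the subtrees avoiding $y$ shows that $\eta$ strictly increases. Iterating collapses every pendant onto a single hub per color class, yielding $D(p,q)$; decomposing the subtrees of $D(p,q)$ by intersection with the two hubs then gives the value $2^{n-2}+2^{p-1}+2^{q-1}+n-2$. For (ii), rerunning the argument from an arbitrary $T\neq D(p,q)$ must halt one step before producing $D(p,q)$, and a short case analysis according as $p>2$ or $p=2$ identifies the blocked configuration as $B(p,q)$ and yields the two claimed formulas.

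For part (iii) the key move is \emph{path elongation}. Since any path in a bipartite graph alternates between the two color classes, a tree with $(p,q)$-bipartition satisfies $\mathrm{diam}(T)\le 2p$ (and $\le 2p-1$ if $p=q$). Whenever the diameter falls short of this bound, or whenever a branching vertex lies off a diametrical path, I would regraft a branch onto a tip of the path. The regraft must be chosen to preserve the bipartition, which occasionally forces moving a whole pendant subtree rather than a single vertex. A direct calculation via the product rule shows the regraft strictly decreases $\eta$. Iteration reduces $T$ to a double broom $P_k(a,b)$ with $k=2p-1$ and $a+b=n-2p+1$, and a final comparison between $P_{2p-1}(a,b)$ and $P_{2p-1}(a-1,b+1)$ shows $\eta$ is minimized when $|a-b|\le 1$, giving the claimed extremal tree. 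Its subtree count then follows by sorting subtrees into those confined to the central path, those using exactly one pendant cluster, those using both, and isolated pendants.

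The main obstacle is the bipartition constraint: not every monotone local move available for the unconstrained extremal subtree problem preserves membership in $\mathscr{P}_n^{p,q}$, so for (i)--(ii) one is restricted to moving leaves between same-color vertices, and for (iii) a naive single-vertex regraft may swap color classes and must be replaced by a two-step or subtree-level regraft. A secondary issue is strictness of the inequalities: degenerate cases (a singleton branch, a diametrical path with multiple branching vertices, or the collapse of the central $P_3$ of $B(p,q)$ when $p=2$) require separate treatment, and this last degeneration is precisely what produces the two distinct formulas in part (ii).
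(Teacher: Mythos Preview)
Your overall strategy---bipartition-preserving local surgeries that are monotone in $\eta$, iterated to a fixed point---is the paper's strategy too. But the specific move you propose for part~(i) is too weak, and your iteration does not always terminate at $D(p,q)$.

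Concretely, take $T=P_6\in\mathscr{P}_6^{3,3}$. The color classes are $\{v_1,v_3,v_5\}$ and $\{v_2,v_4,v_6\}$; the only vertices carrying pendant neighbors are $v_2$ (with pendant $v_1$) and $v_5$ (with pendant $v_6$), and these lie in \emph{different} classes. Your leaf-concentration move therefore never fires, so $P_6$ is a fixed point of your iteration---yet $P_6\ne D(3,3)$. More generally, every even path $P_{2m}$ with $m\ge 3$, and any caterpillar whose two end clusters sit at opposite-parity spine positions, is such a fixed point. A second, smaller gap: even when your move does apply, you do not specify the direction. Moving pendants from $x$ to $y$ increases $\eta$ only when $y$ has the larger $f$-value in the intermediate tree (Corollary~\ref{cor2.4}); if you move them the wrong way, $\eta$ drops.

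The paper repairs this by moving \emph{branches} rather than just leaves. If $\iota(T)\ge 3$, pigeonhole puts two non-pendant vertices $u,v$ in the same color class; with $P=P_T(u,v)$, one detaches \emph{all} neighbors of $u$ off $P$ (not merely the pendant ones) and reattaches them at $v$, or vice versa, with Lemma~\ref{lem2.7} and Corollary~\ref{cor2.4} dictating the direction so that $\eta$ strictly increases. Each application drops $\iota$ by one, so the iteration genuinely reaches $\iota=2$, i.e.\ $D(p,q)$. The same defect then propagates to your part~(ii): once the correct move is in place, ``halt one step before $D(p,q)$'' becomes precisely the statement $\iota(T')=3$, after which the paper's case analysis (which two of the three internal vertices share a class, and why one outer cluster must have size~$1$) pins down $B(p,q)$. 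Your sketch for part~(iii) matches the paper's argument.
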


Given positive integers $n, q$ with $q\ge 2$, we call $T$ a \textit{complete $q$-ary tree} (or \textit{$q$-ary tree} for short) if any
non-pendant vertex $v$ in $T$ has exactly $q$ neighbours. Denote by $\mathscr{A}_n^q$ the class of $q$-ary trees with $n$ non-leaf
vertices ($(q-2)n+2$ leaves). Consider the path $P_{n+2}$ and attach $q-2$ pendant edges to each of the
non-leaf vertices of $P_{n+2}$. Denote the resulting tree by $\hat{T}_n^q$ (see Fig. 3).
It is easy to see that $\hat{T}_n^q \in \mathscr{A}_n^q$. In view of Theorem 2.3 in \cite{19}, it is easy to determine the tree in
$\mathscr{A}_n^q$ which maximizes the total number of subtrees. It is natural and interesting to determine the sharp lower bound on the total number of subtrees of trees among $\mathscr{A}_n^q.$
\begin{figure}[h!]
\begin{center}
\psfrag{i}{$v_i$}\psfrag{n}{$v_n$}\psfrag{a}{$v_{i-1}$}\psfrag{b}{$v_{i+1}$}
\psfrag{c}{$v_{n+1}$}\psfrag{d}{$q-2$}\psfrag{0}{$v_0$}\psfrag{1}{$v_1$}
\psfrag{1}{$v_1$}\psfrag{2}{$v_2$}\psfrag{A}{$\hat{T}_n^q$}\psfrag{B}{$T_{n,k}$}
\psfrag{3}{$v_{i-1}$}\psfrag{4}{$v_i$}\psfrag{5}{$v_{i+1}$}\psfrag{6}{$v_{n-k+1}$}\psfrag{k}{$k-1$}
  \includegraphics[width=80mm]{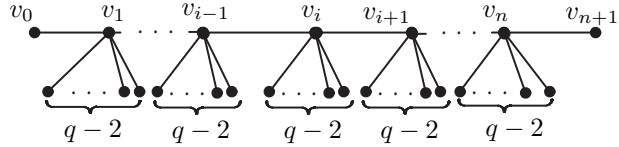}\\
  \caption{Tree $\hat{T}_n^q.$ }
\end{center}
\end{figure}
\begin{thm}
For $n\ge 1$, precisely the graph $\hat{T}_n^q$ (see Fig. 3), which has
$$
\frac{2^{q-2}(2^{q-1}-1)^2(2^{(n-1)(q-2)}-1)}{(2^{q-2}-1)^2}-\frac{n-1}{2^{q-2}-1}+2^q+nq-3n+3
$$
subtrees, minimizes the total number of subtrees among $\mathscr{A}_n^q.$
\end{thm}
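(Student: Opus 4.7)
The plan is to reduce the problem of counting subtrees of $T\in\mathscr{A}_n^q$ to a weighted subtree enumeration on the ``skeleton'' obtained by deleting all leaves, and then to prove by induction, via a path-straightening grafting move, that the path is the extremal skeleton.

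For $T\in\mathscr{A}_n^q$, let $T^*$ denote the subgraph induced by the $n$ non-leaf vertices. Since every non-leaf vertex of $T$ has degree exactly $q$, $T^*$ is a tree on $n$ vertices with maximum degree at most $q$, and $T$ is recovered from $T^*$ by attaching $q-d_{T^*}(v)$ pendant leaves at each $v\in V(T^*)$; this is a bijection between $\mathscr{A}_n^q$ and the set of trees of order $n$ with maximum degree at most $q$. A subtree $S$ of $T$ is either a single leaf (giving $(q-2)n+2$ subtrees) or meets $T^*$ in a nonempty subtree $S^*$, in which case $S$ is determined by $S^*$ together with an independent choice of a subset of the $q-d_{T^*}(v)$ pendant leaves at each $v\in S^*$. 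This gives the identity
\[
f(T)=(q-2)n+2+F(T^*),\qquad F(T^*):=\sum_{\emptyset\neq S^*\subseteq T^*}\prod_{v\in S^*}2^{\,q-d_{T^*}(v)}.
\]
When $T^*=P_n$ every subtree is a sub-path; enumerating sub-paths by length and by how many of the two endpoints of $P_n$ they contain (endpoints carry weight $2^{q-1}$, interior vertices $2^{q-2}$) and summing the resulting geometric series produces a closed form for $F(P_n)$ that, together with the above identity, yields the formula stated in the theorem.

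It remains to show $F(T^*)\ge F(P_n)$ for every tree $T^*$ on $n$ vertices with $\Delta(T^*)\le q$, with equality only if $T^*=P_n$. I would argue by induction on $n$, the cases $n\le 2$ being immediate. For $n\ge 3$ with $T^*\ne P_n$ there must be an internal branching vertex: fix a longest path $P=w_0w_1\cdots w_D$ of $T^*$ and a vertex $w_i$ with $1\le i\le D-1$ and $d_{T^*}(w_i)\ge 3$, and let $B$ be a branch of $T^*-w_i$ rooted at a neighbour of $w_i$ off $P$. Consider the grafting move that removes the edge from $w_i$ to $B$ and reattaches $B$ at the endpoint $w_0$; the result $\widetilde{T}^*$ has the same order, satisfies $\Delta(\widetilde{T}^*)\le q$, and has strictly longer longest path, so iterating the move eventually reaches $P_n$. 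It therefore suffices to prove $F(\widetilde{T}^*)<F(T^*)$.

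The main obstacle is exactly this strict inequality. Unlike in classical subtree or Wiener-index transformations, here the vertex weights $2^{q-d_{T^*}(v)}$ depend on the degree in $T^*$, and the grafting move changes two of them simultaneously: $d_{T^*}(w_i)$ decreases by one (doubling its weight) while $d_{T^*}(w_0)$ increases by one (halving its weight); moreover, subtrees containing $B$ are anchored at $w_i$ in $T^*$ but at $w_0$ in $\widetilde{T}^*$. My plan for the inequality is to partition the subtrees enumerated by $F(T^*)$ into four groups according to whether $S^*$ meets $B$ and whether $S^*$ contains the segment $w_0w_1\cdots w_i$, expand $F(\widetilde{T}^*)$ analogously, and compare contributions group by group; the weight changes at $w_i$ and $w_0$ cancel on subtrees that contain both, so the strict gain comes from the asymmetric cases together with the newly created subtrees at $w_0$. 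Making this bookkeeping precise, and verifying that strict decrease holds in every subcase, is the technical heart of the argument.
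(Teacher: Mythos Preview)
Your reduction to the weighted skeleton count
\[
f(T)=(q-2)n+2+F(T^*),\qquad F(T^*)=\sum_{\emptyset\ne S^*\subseteq T^*}\ \prod_{v\in S^*}2^{\,q-d_{T^*}(v)},
\]
is correct and is a genuinely different entry point from the paper's. The paper never passes to the skeleton; it works directly with the full $q$-ary tree $T$, chooses the first non-leaf vertex $v_{l_j}$ hanging off a longest path, and uses Lemma~\ref{lem2.2} together with Corollary~\ref{cor2.4} to show that transplanting the subtree below $v_{l_j}$ to the path endpoint $v_0$ strictly decreases $F(T)$.

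Your grafting inequality $F(\widetilde T^*)<F(T^*)$ is true, and the bookkeeping you anticipate actually collapses. Writing $A=T^*-V(B)$, and setting
\[
P_j=\sum_{\substack{S\subseteq A\\ S\ni w_j}}\ \prod_{v\in S}2^{\,q-d_A(v)},\qquad
R=\sum_{\substack{S\subseteq B\\ S\ni r}}\ \prod_{v\in S}2^{\,q-d_B(v)},
\]
a short computation (splitting subtrees according to whether they meet $B$ and whether they contain $w_0$ or $w_i$) gives
\[
F(T^*)-F(\widetilde T^*)=\tfrac14\,(P_i-P_0)(R-2).
\]
One checks $R\ge 2^{\,q}>2$, so everything rests on $P_i>P_0$. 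But $P_j$ is precisely $f_{T_A}(w_j)$, where $T_A$ is the $q$-ary tree obtained from $A$ by attaching $q-d_A(v)$ pendant leaves at each vertex. In other words, the ``weighted'' comparison you flag as the technical heart is literally an unweighted $f$-comparison on a $q$-ary tree, and the cleanest way to settle it is Lemma~\ref{lem2.2}---exactly the tool the paper uses. So the skeleton detour, while conceptually appealing (and it does streamline the closed-form computation of $F(P_n)$), does not ultimately bypass the paper's key lemma.

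One caution: as stated, you allow $w_i$ to be an arbitrary branching vertex on $P$, but then $P_i>P_0$ is not automatic from unimodality alone. Either take $w_i$ to be the branching vertex closest to $w_0$ (so that $d_{T^*}(w_1)=\cdots=d_{T^*}(w_{i-1})=2$ and Lemma~\ref{lem2.2} applies symmetrically along $w_0\cdots w_i$), or graft $B$ to whichever endpoint of $P$ has the smaller $P$-value (unimodality then gives $P_i>\min(P_0,P_D)$ for every interior $i$). The paper makes the analogous minimality choice explicitly.
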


\section{\normalsize Some Lemmas}
In this section, we give some necessary results which will be used to prove our main results.
For a set $S,$ let $|S|$ denote its cardinality. For two graphs $G_1, G_2$, if $G_1$ is a \textit{connected subgraph} of $G_2$, then we denote it by $G_1\subseteq G_2$.
Given a tree $T$ with $u,\,v\in V_T$, let
$$
\begin{array}{ll}
  \ \ \  f_T(u)  =  |\{T':\ \ T'\subseteq T, \, u\in V_{T'}\}|, & f_T(u*v) =  |\{T':\ \ T'\subseteq T, \, u,\,v\in V_{T'}\}|,\\
  f_T(u/v) =  |\{T':\ \ T'\subseteq T, \, u\in V_{T'}, v\not\in V_{T'}\}|, &\ \ \ \ \  F(T)=|\{T': \, T'\subseteq T,\, |V_{T'}|\ge 1\}|.
\end{array}
$$
\begin{lem}[\cite{21}]\label{lem2.1}
The $n$-vertex path $P_n$ has ${{n+1}\choose{2}}$ subtrees, fewer than any other tree on n
vertices.
\end{lem}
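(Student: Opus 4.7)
The plan is to prove simultaneously that $F(T)\ge \binom{n+1}{2}$ for every $n$-vertex tree $T$ and that equality forces $T=P_n$, by exhibiting an explicit injection from a canonical index set of size $\binom{n+1}{2}$ into the family of subtrees of $T$. Concretely, to each vertex $w\in V_T$ I associate the one-vertex subtree $\{w\}$, and to each unordered pair of distinct vertices $\{u,v\}\subseteq V_T$ I associate the unique $u$-$v$ path $P_T(u,v)$; since $T$ is a tree, $P_T(u,v)$ is an induced connected subgraph, hence a subtree. The number of such \emph{path-subtrees} is $n+\binom{n}{2}=\binom{n+1}{2}$.

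The key verification is that this assignment is injective. A subtree produced this way is itself a path graph; if it has at least one edge, its two vertices of degree $1$ are exactly the endpoints $u$ and $v$, while a single-vertex subtree is obviously the singleton $\{w\}$. Thus the index can be recovered from the subtree, so the map is injective and yields the desired lower bound $F(T)\ge \binom{n+1}{2}$.

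To pin down equality, I would argue as follows. If $T=P_n$, then every connected induced subgraph is a subpath, so the injection above is already a bijection and $F(P_n)=\binom{n+1}{2}$. If $T\neq P_n$, then $T$ has a vertex $v$ of degree at least $3$; picking three of its neighbours $a,b,c$ gives the subtree on $\{v,a,b,c\}$, which is a copy of $K_{1,3}$ and therefore is not a path, so it lies outside the image of the path-subtree injection. This produces at least one extra subtree beyond the $\binom{n+1}{2}$ already counted, giving $F(T)\ge \binom{n+1}{2}+1>\binom{n+1}{2}$.

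I do not foresee a genuine obstacle; the only step that deserves attention is the injectivity of the path-subtree map, and that reduces to the elementary fact that a path graph's endpoints are recoverable from the graph itself. An alternative, if one preferred, would be an induction on $n$ using the recursion $F(T)=F(T-v)+f_{T-v}(u)+1$ for a leaf $v$ with neighbour $u$, combined with the easy bound $f_{T'}(u)\ge |V_{T'}|$; but the injection argument above is shorter and also makes the equality case transparent.
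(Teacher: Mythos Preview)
Your argument is correct. The paper does not supply its own proof of this lemma; it is quoted directly from \cite{21} without demonstration, so there is no in-paper argument to compare against. Your injection from singletons and unordered pairs $\{u,v\}$ to path-subtrees $P_T(u,v)$ is sound: injectivity follows because a path with at least one edge has exactly two vertices of degree $1$ (its endpoints), so the index pair is recoverable, and the single-vertex case is trivial. The strictness argument for $T\ne P_n$ is also fine: such a tree necessarily contains a vertex of degree at least $3$, and the induced $K_{1,3}$ on that vertex and three of its neighbours is a subtree lying outside the image of your injection. The count $n+\binom{n}{2}=\binom{n+1}{2}$ and the observation that every subtree of $P_n$ is itself a subpath complete the equality case. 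The alternative leaf-deletion induction you sketch at the end would also work, but your direct bijective/injective argument is cleaner and requires no induction hypothesis.
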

\begin{figure}[h!]
\begin{center}
\psfrag{a}{$x_1$}\psfrag{A}{$X_1$}
\psfrag{b}{$x_n$}\psfrag{B}{$X_n$}
\psfrag{c}{$y_n$}\psfrag{C}{$Y_n$}
\psfrag{d}{$y_1$}\psfrag{D}{$Y_1$}
\psfrag{y}{$y$}\psfrag{Y}{$Y$}\psfrag{z}{$z$}\psfrag{Z}{$Z$}\psfrag{x}{$x$}\psfrag{X}{$X$}
  \includegraphics[width=110mm]{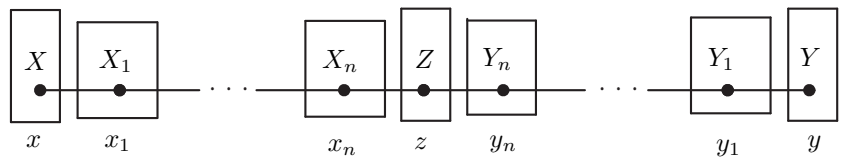}\\
  \caption{Path $P_W(x, y)$ connecting vertices $x$ and $y$. }
\end{center}
\end{figure}
Consider the tree $W$ in Fig. 4 with vertices $x$ and $y$, and
$$
P_W(x, y) = x_0(x)x_1 \ldots x_nzy_n \ldots y_1y_0(y)(x_0(x)x_1 \ldots x_ny_n \ldots y_1y_0(y))
$$
if $d_W(x, y)$ is even (odd) for any $n \geq 0$.
After the deletion of all the edges of $P_W(x, y)$ from $W$, some connected components
will remain. Let $X_i (X_0)$ denote the component that contains $x_i (x_0 = x)$, let
$Y_i (Y_0)$ denote the component that contains $y_i (y_0 = y)$, for $i = 1, 2, \ldots , n$, and let $Z$
denote the component that contains $z$.
\begin{lem}[\cite{20}]\label{lem2.2}
In the above situation, if $f_{X_i} (x_i) \geq f_{Y_i} (y_i)$ for $i = 0, 1, \ldots , n$, then
$f_W(x) \geq f_W(y)$. Furthermore, $f_W(x) = f_W(y)$ if and only if $f_{X_i} (x_i) = f_{Y_i} (y_i)$ for all
$i$.
\end{lem}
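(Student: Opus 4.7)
My plan is to derive explicit formulas for $f_W(x)$ and (by symmetry) $f_W(y)$ by enumerating the subtrees of $W$ containing $x$ according to their intersection with the path $P_W(x,y)$, and then to compare the two formulas term by term.

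First I would observe that for any subtree $T'\subseteq W$ containing $x=x_0$, the set $V_{T'}\cap V_{P_W(x,y)}$ must be a connected prefix of $P_W(x,y)$ starting at $x_0$, since $T'$ is connected. In the even-distance case the possible prefixes are $\{x_0,\ldots,x_k\}$ for $0\le k\le n$, $\{x_0,\ldots,x_n,z\}$, or $\{x_0,\ldots,x_n,z,y_n,\ldots,y_j\}$ for $0\le j\le n$; once such a prefix is fixed, $T'$ is determined by independent choices of a subtree of $X_i$ containing $x_i$ for each $x_i$ in the prefix, a subtree of $Z$ containing $z$ if $z$ lies in the prefix, and a subtree of $Y_i$ containing $y_i$ for each $y_i$ in the prefix. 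Writing $a_i:=f_{X_i}(x_i)$, $b_i:=f_{Y_i}(y_i)$, $c:=f_Z(z)$, this enumeration yields
\[
f_W(x) \;=\; \sum_{k=0}^{n} \prod_{i=0}^k a_i \;+\; c\Bigl(\prod_{i=0}^n a_i\Bigr)\biggl(1 + \sum_{j=0}^{n} \prod_{i=j}^n b_i\biggr),
\]
and $f_W(y)$ is obtained by interchanging the $a_i$'s with the $b_i$'s throughout. The odd-distance case admits the analogous formula $\sum_{k=0}^n\prod_{i=0}^k a_i + (\prod_{i=0}^n a_i)\sum_{j=0}^n \prod_{i=j}^n b_i$, to which the same argument below applies.

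Next I would subtract and exhibit $f_W(x)-f_W(y)$ as a sum of nonnegative terms. The ``simple'' differences $\prod_{i=0}^k a_i-\prod_{i=0}^k b_i$ are nonnegative since $a_i\ge b_i$ by hypothesis and $a_i,b_i\ge 1$ (the singletons $\{x_i\}$ and $\{y_i\}$ are themselves subtrees). For the ``cross'' differences I would use the factorization
\[
\Bigl(\prod_{i=0}^n a_i\Bigr)\Bigl(\prod_{i=j}^n b_i\Bigr)\;-\;\Bigl(\prod_{i=0}^n b_i\Bigr)\Bigl(\prod_{i=j}^n a_i\Bigr) \;=\; \Bigl(\prod_{i=j}^n a_ib_i\Bigr)\Bigl(\prod_{i=0}^{j-1} a_i\;-\;\prod_{i=0}^{j-1} b_i\Bigr),
\]
which is again nonnegative (and is zero for $j=0$ under the empty-product convention). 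Summing over all cases gives $f_W(x)\ge f_W(y)$.

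For the equality clause, vanishing of the $k=0$ simple difference forces $a_0=b_0$, after which a short induction on $k$, using positivity of the $a_i$'s and $b_i$'s, propagates $a_k=b_k$ for every $k$. The main obstacle is essentially bookkeeping: keeping the even- and odd-distance cases aligned and handling the empty-product conventions at the boundaries $k=0$ and $j=0$. Once the prefix decomposition is set up carefully, the remainder is a routine algebraic comparison.
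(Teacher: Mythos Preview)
The paper does not supply its own proof of this lemma; it is quoted verbatim from Sz\'ekely and Wang \cite{20} and used as a black box. There is therefore nothing in the present paper to compare your argument against.

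That said, your proof is correct and is essentially the standard one. Your key observation---that a subtree of $W$ containing $x$ meets $P_W(x,y)$ in an initial segment, so that $f_W(x)$ factors as a sum of products of the local counts $f_{X_i}(x_i)$, $f_Z(z)$, $f_{Y_i}(y_i)$---is exactly the mechanism behind the original proof in \cite{20}. The term-by-term comparison via $\prod_{i=0}^k a_i\ge\prod_{i=0}^k b_i$ and the factorization of the cross terms are routine once the formula is in hand, and your treatment of the equality case by induction on $k$ is clean. The only cosmetic point is that you might state explicitly that $a_i,b_i\ge 1$ (since the single vertex is itself a subtree), which you use both for the monotonicity of the products and for the division step in the equality argument; you allude to this but it is worth flagging.
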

If we have a tree $T$ with vertices $x$ and $y$, and two rooted
trees $X$ and $Y$, then we can build two new trees, first $T'$, by
identifying the root of $X$ with $x$ and the root of $Y$ with $y$,
second $T''$, by identifying the root of $X$ with $y$ and the root of
$Y$ with $x$ (as shown in Fig. 5).
\begin{figure}[h!]
\begin{center}
\psfrag{a}{$T'$}\psfrag{b}{$T''$}\psfrag{T}{$T$}
\psfrag{y}{$y$}\psfrag{Y}{$Y$}\psfrag{x}{$x$}\psfrag{X}{$X$}
  \includegraphics[width=90mm]{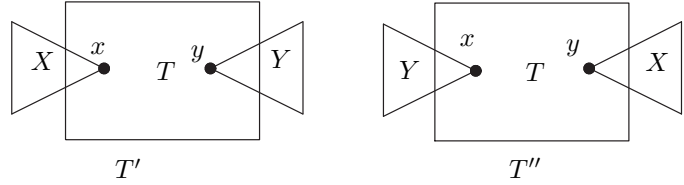}\\
  \caption{Switching subtrees rooted at $x$ and $y$.}
\end{center}
\end{figure}
\begin{lem}[\cite{20}]\label{lem2.3}
In the above situation, if $f_T(x)> f_T(y), f_X(x)< f_Y(y)$, then we have $F(T'')> F(T')$.
\end{lem}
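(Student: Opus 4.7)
The plan is to express $F(T')$ and $F(T'')$ by partitioning each tree's subtrees according to which of the distinguished vertices $x,y$ they include, and then to notice that the attachment vertices are cut-vertices in the combined trees, so that a subtree straddling a junction decomposes uniquely as a product of its traces on the pieces. In $T'$, the only vertex shared by (the copy of) $X$ and $T$ is $x$, and the only vertex shared by $Y$ and $T$ is $y$; the analogous statement holds for $T''$ with the roles of $X$ and $Y$ swapped at $x,y$.

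Carrying this out for $T'$, I would classify each subtree by whether it uses $x$, whether it uses $y$, and in which of the three pieces $X,Y,T$ it lives. This gives six exhaustive and disjoint classes: subtrees lying inside $X$ and avoiding $x$ (contributing $F(X)-f_X(x)$), subtrees lying inside $Y$ and avoiding $y$ (contributing $F(Y)-f_Y(y)$), subtrees inside $T$ avoiding both $x$ and $y$ (some number $N_T$ depending only on $T$), subtrees using $x$ but not $y$ (in bijection with pairs consisting of a subtree of $X$ through $x$ and a subtree of $T$ through $x$ avoiding $y$, hence $f_X(x)\,f_T(x/y)$), subtrees using $y$ but not $x$ (similarly $f_Y(y)\,f_T(y/x)$), and subtrees using both $x$ and $y$ (triples yielding $f_X(x)\,f_T(x*y)\,f_Y(y)$). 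Thus
\begin{align*}
F(T') &= (F(X)-f_X(x))+(F(Y)-f_Y(y))+N_T \\
&\quad + f_X(x)\,f_T(x/y)+f_Y(y)\,f_T(y/x)+f_X(x)\,f_T(x*y)\,f_Y(y).
\end{align*}

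The identical bookkeeping for $T''$ differs only in that $X$ and $Y$ are attached at $y$ and $x$ respectively, which exchanges the roles of $f_X(x)$ and $f_Y(y)$ in the two single-vertex-through terms while leaving every other summand unchanged. Subtracting then gives
\begin{align*}
F(T'')-F(T') &= \bigl(f_Y(y)\,f_T(x/y)+f_X(x)\,f_T(y/x)\bigr)-\bigl(f_X(x)\,f_T(x/y)+f_Y(y)\,f_T(y/x)\bigr)\\
&= \bigl(f_Y(y)-f_X(x)\bigr)\bigl(f_T(x/y)-f_T(y/x)\bigr).
\end{align*}
Using the identities $f_T(x)=f_T(x/y)+f_T(x*y)$ and $f_T(y)=f_T(y/x)+f_T(x*y)$, the second factor simplifies to $f_T(x)-f_T(y)$, so
\[ F(T'')-F(T')=\bigl(f_Y(y)-f_X(x)\bigr)\bigl(f_T(x)-f_T(y)\bigr), \]
which is strictly positive by the two hypotheses.

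The only substantive point to verify carefully is the product-decomposition bijection used in the six cases: that is, for a connected subgraph of $T'$ meeting both the $X$-piece and the $T$-piece, the intersection with each piece is itself connected and must contain $x$, which is immediate because $x$ is a cut-vertex of $T'$ separating $X-x$ from $T-x$ (and likewise for $y$). Once this bijection is stated, the rest is a one-line algebraic identity, so I expect no conceptual obstacle beyond a careful book-keeping of the cases.
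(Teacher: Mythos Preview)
Your argument is correct: the cut-vertex decomposition at $x$ and $y$ gives exactly the six classes you list, the cross-terms cancel as claimed, and the resulting identity
\[
F(T'')-F(T')=\bigl(f_Y(y)-f_X(x)\bigr)\bigl(f_T(x)-f_T(y)\bigr)
\]
yields the strict inequality from the hypotheses. Note, however, that the present paper does not actually supply a proof of this lemma; it is quoted from \cite{20} (Sz\'ekely and Wang), so there is no in-paper argument to compare against. Your proof is in fact the standard one given in that reference, so there is no methodological divergence to discuss.
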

\begin{cor}\label{cor2.4}
In the above situation, if $f_T(x)> f_T(y)$ and $X$ is a rooted tree that is not a single vertex, then we have $F(T'')> F(T')$,
 where $T'$ (resp. $T''$) is obtained by identifying the root of $X$ with $y$ (resp. $x$) of $T$.
\end{cor}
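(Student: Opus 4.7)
The plan is to deduce Corollary~\ref{cor2.4} directly from Lemma~\ref{lem2.3} by making a careful choice of one of the two rooted trees. Lemma~\ref{lem2.3} compares two ways of attaching a pair of rooted trees at two distinguished vertices $x,y$ of $T$, so to apply it to the one-rooted-tree situation of the corollary, I would take the second rooted tree in the lemma's setup to be a single vertex. More precisely, in the notation of Lemma~\ref{lem2.3} I would let $Y$ be the rooted tree $X$ from the corollary (keeping its given root, call it $r$) and let $X$ be a rooted tree consisting of a single vertex.

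With this choice, identifying the root of a single-vertex rooted tree with a vertex of $T$ is a no-op, so the tree $T'$ produced by Lemma~\ref{lem2.3} (which attaches the single vertex at $x$ and the corollary's $X$ at $y$) coincides with the $T'$ of the corollary, and the $T''$ of Lemma~\ref{lem2.3} coincides with the $T''$ of the corollary. The first hypothesis $f_T(x) > f_T(y)$ is given by the corollary. The second hypothesis of Lemma~\ref{lem2.3}, $f_X(x) < f_Y(y)$, translates under this identification to $1 < f_X(r)$; this holds because $X$ is assumed not to be a single vertex, hence $r$ has at least one neighbour in $X$ and so the two distinct connected subgraphs $\{r\}$ and the edge from $r$ to that neighbour already give $f_X(r) \ge 2$. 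Lemma~\ref{lem2.3} then yields $F(T'') > F(T')$, which is exactly the statement of Corollary~\ref{cor2.4}.

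The only real obstacle here is bookkeeping: the corollary's $T'$ and $T''$ are labelled in a way that is essentially opposite to the intuitive ``base vs.\ modified'' convention (the attachment at the vertex $x$ of larger $f_T$-value is $T''$, not $T'$), so one must verify that the above matching of the two setups lines up with the direction of the inequality in Lemma~\ref{lem2.3} rather than producing the reverse. Once that matching is checked, no further computation is required, since the corollary is just a clean degenerate instance of the lemma.
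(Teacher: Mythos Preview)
Your proposal is correct and is exactly the intended derivation: the paper states Corollary~\ref{cor2.4} immediately after Lemma~\ref{lem2.3} with no separate proof, treating it as the degenerate case of the lemma in which one of the two rooted trees is a single vertex. Your bookkeeping check that the labels $T'$ and $T''$ line up with the direction of the inequality is accurate.
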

\begin{lem}\label{lem2.5}
Given an $n$-vertex path $P_n=v_1 v_2 \ldots v_n$, one has $f_{P_n}(v_k)=k(n-k+1)$ for $k\in \{1,2,\ldots, n\}$. Furthermore, one has
\begin{equation}\label{eq:2.1}
\begin{split}
f_{P_n}(v_k)&=f_{P_n}(v_{n-k+1}),\\
f_{P_n}(v_1)< f_{P_n}(v_2)<\cdots <f_{P_n}(v_k)&<f_{P_n}(v_{k+1}) <\cdots < f_{P_n}(v_{\lfloor
\frac{n+1}{2} \rfloor})=f_{P_n}(v_{\lceil \frac{n+1}{2} \rceil}).
\end{split}
\end{equation}
\end{lem}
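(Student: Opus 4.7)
My plan is to observe that in a path every connected subgraph (that is, every subtree in the sense used here) is itself a subpath. Consequently, a subtree of $P_n=v_1v_2\ldots v_n$ that contains the vertex $v_k$ must be of the form $v_iv_{i+1}\ldots v_j$ with $1\le i\le k\le j\le n$. The index $i$ can be chosen from $\{1,\ldots,k\}$ (giving $k$ choices) and, independently, $j$ can be chosen from $\{k,\ldots,n\}$ (giving $n-k+1$ choices). Multiplying the two counts yields $f_{P_n}(v_k)=k(n-k+1)$, which is the formula stated in the lemma.

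The symmetry $f_{P_n}(v_k)=f_{P_n}(v_{n-k+1})$ then follows by direct substitution: $f_{P_n}(v_{n-k+1}) = (n-k+1)\bigl(n-(n-k+1)+1\bigr)=(n-k+1)k$. This is also intuitively clear from the obvious graph automorphism of $P_n$ that sends $v_i$ to $v_{n-i+1}$ for every $i$.

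For the chain of strict inequalities, I view $g(k):=k(n-k+1)=-k^{2}+(n+1)k$ as a quadratic function of a real variable. Since its leading coefficient is negative and its (real) maximum is attained at $k=(n+1)/2$, the function $g$ is strictly increasing on the interval $(-\infty,(n+1)/2]$. Restricting $g$ to integer arguments therefore gives $g(1)<g(2)<\cdots<g\bigl(\lfloor(n+1)/2\rfloor\bigr)$, and the already-established symmetry $g(k)=g(n-k+1)$ yields the final equality $g\bigl(\lfloor(n+1)/2\rfloor\bigr)=g\bigl(\lceil(n+1)/2\rceil\bigr)$ (this equality being vacuous when $n$ is odd and $\lfloor(n+1)/2\rfloor=\lceil(n+1)/2\rceil$). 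This is purely a short computation; I do not foresee any real obstacle, the only mildly delicate point being to treat the parity of $n$ correctly so that the chain in \eqref{eq:2.1} terminates at the middle index (or indices) of the path.
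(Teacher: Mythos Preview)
Your proposal is correct and follows essentially the same approach as the paper's own proof: both count subpaths through $v_k$ by independently choosing the left endpoint from $\{1,\ldots,k\}$ and the right endpoint from $\{k,\ldots,n\}$, and both then analyze the resulting quadratic $k(n-k+1)$ to obtain the symmetry and the monotone chain up to the middle index. The only cosmetic difference is that you explicitly mention the path automorphism for the symmetry, whereas the paper relies solely on the algebraic identity.
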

\begin{proof}
For any $P \subseteq P_n$ such that $P$ contains $v_k$, it can be denoted by
$P=v_i v_{i+1} \ldots v_k\ldots v_j$, where $i\leq k \leq j$. It is easy to see that we have $i
\in \{1, 2, \ldots, k\}$ and $j \in \{k,k+1, \ldots, n\}.$ Hence, we
have $f_{P_n}(v_k)=k(n-k+1)$.

Consider the function $f(x)=x(n-x+1)$ for $x\ge 0$. By the monotonicity of $f(x)$, we have
$$
f(k)=f(n-k+1),\ \ \ \ f(1)<f(2)<\cdots<f(k)<f(k+1)<\cdots<f(\left\lfloor\frac{n+1}{2} \right\rfloor)=f(\left\lceil \frac{n+1}{2} \right\rceil),
$$
which is equivalent to
(\ref{eq:2.1}), as desired.
\end{proof}

By Corollary \ref{cor2.4} and Lemma \ref{lem2.5}, the following lemma follows immediately.
\begin{lem}\label{lem2.9}
Given a tree $T$ with at least two vertices and a path $P_k= v_1 v_2 \ldots
v_k$, let $T_i$ be a tree obtained from $T$ and $P_k$ by identifying a vertex $v$ of $T$ with
$v_i$ of $P_k$,\, $i\in \{2, 3,\ldots, \lfloor \frac{k+1}{2} \rfloor-2\}$. Then we have
$$F(T_i)=F(T_{k-i+1})$$
and $$
F(T_1)< F(T_2)<\cdots <F(T_i)  <\cdots < F(T_{\lfloor \frac{k+1}{2}\rfloor}).
$$
\end{lem}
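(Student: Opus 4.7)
The plan is to obtain both conclusions directly by one application of Corollary \ref{cor2.4}, in which the base tree is $P_k$ and the attached rooted tree is the given tree $T$ rooted at $v$. For the equality $F(T_i)=F(T_{k-i+1})$, observe that $v_j\mapsto v_{k-j+1}$ is an automorphism of $P_k$ sending $v_i$ to $v_{k-i+1}$; since the same rooted tree $(T,v)$ is attached in both constructions, this induces an isomorphism $T_i\cong T_{k-i+1}$, whence $F(T_i)=F(T_{k-i+1})$.

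For the strict inequalities, fix $1\le i<\lfloor (k+1)/2\rfloor$. By Lemma \ref{lem2.5},
\[
f_{P_k}(v_i)<f_{P_k}(v_{i+1}).
\]
The hypothesis that $T$ has at least two vertices guarantees that $(T,v)$ is not a single-vertex rooted tree, so Corollary \ref{cor2.4} applies with base tree $P_k$, distinguished vertices $x=v_{i+1}$, $y=v_i$, and attached rooted tree $X=(T,v)$. Its conclusion $F(T'')>F(T')$ translates immediately into $F(T_{i+1})>F(T_i)$: $T'$ is the tree obtained by identifying the root $v$ of $X$ with $y=v_i$, which is exactly $T_i$, and $T''$ identifies $v$ with $x=v_{i+1}$, which is $T_{i+1}$. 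Iterating this inequality from $i=1$ up to $\lfloor (k+1)/2\rfloor-1$ produces the strictly increasing chain required.

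There is essentially no obstacle here — the argument is a mechanical composition of Lemma \ref{lem2.5} and Corollary \ref{cor2.4}. The only point that calls for care is the matching of notation: the letter $T$ in Corollary \ref{cor2.4} plays the role of a base tree, which in our setting must be the path $P_k$ rather than the tree also named $T$ in the present lemma, while the attached rooted tree $X$ is our $(T,v)$. Once this bookkeeping is straight, each strict inequality $F(T_i)<F(T_{i+1})$ is a one-line consequence, and the complete chain together with the symmetry identity yields the lemma.
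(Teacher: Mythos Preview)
Your argument is correct and is exactly the route the paper takes: the paper simply writes ``By Corollary \ref{cor2.4} and Lemma \ref{lem2.5}, the following lemma follows immediately,'' and your write-up spells out precisely that composition, with the symmetry $T_i\cong T_{k-i+1}$ handling the equality. Your care in distinguishing the base tree $P_k$ from the attached rooted tree $(T,v)$, and in noting that $|V_T|\ge 2$ is needed so that Corollary \ref{cor2.4} applies, is appropriate and matches the intended proof.
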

\begin{lem}\label{lem2.6}
Given a tree $T$ with $uv\in E_T$ and $u$ is a leaf, one has $f_T(u)\leq
f_T(v)$, with equality if and only if $T\cong K_2$.
\end{lem}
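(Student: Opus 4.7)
The plan is to compare $f_T(u)$ and $f_T(v)$ by partitioning the relevant subtrees according to whether they contain the pendant edge $uv$, exploiting the fact that $u$ has only $v$ as a neighbour.

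First, since $u$ is a leaf with unique neighbour $v$, any connected subgraph $T'\subseteq T$ containing $u$ is either the one-vertex subtree consisting of $u$ alone, or else it must contain the edge $uv$ and hence also $v$. This gives
\[
f_T(u) = 1 + f_T(u*v).
\]
Next, I would classify the subtrees of $T$ containing $v$ according to whether they also contain $u$, which yields
\[
f_T(v) = f_T(u*v) + f_T(v/u).
\]
Subtracting these two identities gives
\[
f_T(v) - f_T(u) = f_T(v/u) - 1,
\]
and because the one-vertex subtree $\{v\}$ always contributes to $f_T(v/u)$, one has $f_T(v/u)\ge 1$. Hence $f_T(v)\ge f_T(u)$.

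For the equality case, $f_T(v)=f_T(u)$ forces $f_T(v/u)=1$, i.e.\ the only subtree of $T$ that contains $v$ but omits $u$ is the singleton $\{v\}$ itself. This means $v$ has no neighbour other than $u$, so $T$ consists of the single edge $uv$, i.e.\ $T\cong K_2$. Conversely, $f_{K_2}(u)=f_{K_2}(v)=2$ is immediate. There is no real obstacle here: the argument is a short double-counting exercise, and the key observation is simply that the ``leaf'' hypothesis on $u$ forces every non-trivial subtree through $u$ to also pass through $v$, so the same partition delivers both the inequality and the characterization of the equality case.
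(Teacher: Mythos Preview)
Your proof is correct and follows essentially the same approach as the paper: both decompose $f_T(u)$ and $f_T(v)$ via the partition $f_T(\cdot)=f_T(u*v)+f_T(\cdot/\cdot)$, use that $f_T(u/v)=1$ since $u$ is a leaf, and reduce the inequality to $f_T(v/u)\ge 1$ with equality exactly when $T\cong K_2$. The only cosmetic difference is that the paper writes $f_T(v/u)=f_{T-u}(v)$ before concluding, while you argue directly that $f_T(v/u)=1$ forces $v$ to have no neighbour besides $u$.
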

\begin{proof}
For any edge $uv$ in $E_T$, we have
\[\label{eq:2-2}
  f_T(u)=f_T(u*v)+f_T(u/v),\ \ \ \ f_T(v)=f_T(u*v)+f_T(v/u).
\]
In particular, if $u$ is a leaf and $uv\in E_T$, then we have
$
  f_T(u/v)=1,\, f_T(v/u)= f_{T- u}(v)\geq 1,
$
with equality if and only if $T-u$ is a single vertex, i.e., $T\cong K_2$. Our result holds immediately.
\end{proof}
\begin{lem}\label{lem2.7}
Given a tree $T$ with $u,v\in V_T$ satisfying $f_T(u)\leq f_T(v),$ let $T'$ be a tree
obtained from $T$ by adding a new vertex $v_s$ to some vertex of $T$ such that in $T'$ the unique path between $u$
and $v_s$ contains $v,$ then $f_{T'}(u)< f_{T'}(v)$.
\end{lem}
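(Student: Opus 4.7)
The plan is to use the leaf-identity \eqref{eq:2-2} specialized to the pendant vertex $v_s$. Let $w$ be the vertex of $T$ that $v_s$ is attached to, so $v_s$ is a leaf of $T'$ with neighbor $w$, and by hypothesis the path $P_{T'}(u,v_s)$ passes through $v$. For any vertex $x\in V_T$, write
\[
f_{T'}(x) = f_{T'}(x\ast v_s) + f_{T'}(x/v_s).
\]
I would apply this to both $x=u$ and $x=v$ and compare the two sums termwise.

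For the ``$/v_s$'' terms, a subtree of $T'$ avoiding $v_s$ is precisely a subtree of $T$, so
\[
f_{T'}(u/v_s) = f_T(u) \le f_T(v) = f_{T'}(v/v_s),
\]
using the hypothesis. For the ``$\ast v_s$'' terms, note that any subtree of $T'$ containing $u$ and $v_s$ must contain the whole path $P_{T'}(u,v_s)$, and this path passes through $v$; so the map that sends such a subtree to itself realizes an injection from subtrees counted by $f_{T'}(u\ast v_s)$ into subtrees counted by $f_{T'}(v\ast v_s)$, giving $f_{T'}(u\ast v_s)\le f_{T'}(v\ast v_s)$.

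The strictness comes from the ``$\ast v_s$'' comparison: the path $P_{T'}(v,v_s)$, taken together with $v_s$, is a subtree of $T'$ containing $v$ and $v_s$ but not $u$ (since $v$ lies strictly between $u$ and $v_s$ on $P_{T'}(u,v_s)$, the lemma tacitly requires $u\neq v$). Hence $f_{T'}(u\ast v_s) < f_{T'}(v\ast v_s)$, and adding this to the weak inequality on the ``$/v_s$'' parts yields $f_{T'}(u) < f_{T'}(v)$. There is no real obstacle here; the only care needed is in the injection argument, to make sure the witnessing subtree $P_{T'}(v,v_s)$ is actually a connected subgraph of $T'$ that excludes $u$, which follows directly from the path-containment hypothesis.
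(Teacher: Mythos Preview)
Your proof is correct, but it takes a different decomposition from the paper's. The paper splits $f_{T'}(u)$ and $f_{T'}(v)$ according to whether the subtree contains the \emph{other} compared vertex: it first deduces $f_T(u/v)\le f_T(v/u)$ from the hypothesis via \eqref{eq:2-2}, then observes that $f_{T'}(u/v)=f_T(u/v)$ (subtrees through $u$ avoiding $v$ cannot see $v_s$) while $f_{T'}(v/u)>f_T(v/u)$, and concludes using $f_{T'}(u*v)=f_{T'}(v*u)$. You instead split both quantities according to whether the subtree contains the new leaf $v_s$: the ``$/v_s$'' halves reduce verbatim to the hypothesis in $T$, and the ``$*v_s$'' halves compare by the set inclusion $\{T'':u,v_s\in V_{T''}\}\subsetneq\{T'':v,v_s\in V_{T''}\}$ coming from $v\in P_{T'}(u,v_s)$. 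Your route is arguably more direct here since it invokes the hypothesis $f_T(u)\le f_T(v)$ without first converting it to an inequality on ``$/$''-counts; the paper's route, on the other hand, generalizes immediately to attaching an arbitrary subtree (not just a single vertex) on the $v$-side, since its key identity $f_{T'}(u/v)=f_T(u/v)$ does not rely on $v_s$ being a leaf.
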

\begin{proof}
Note that $f_T(u)\leq f_T(v)$, hence in view of (\ref{eq:2-2}), we have $f_T(u/v)\leq f_T(v/u)$. By the
structure of $T'$, it is straightforward to check that $f_{T'}(u/v)= f_{T}(u/v)$,
and $f_{T'}(v/u)> f_{T}(v/u)$, hence we have
$$f_{T'}(u)=
f_{T'}(u*v)+ f_{T'}(u/v)< f_{T'}(v*u)+ f_{T'}(v/u)= f_{T'}(v),
$$
as desired.
\end{proof}
\begin{lem}\label{lem2.8}
Let $P=uu_1\ldots v_1v$ be a path of a tree $T$ with $N_T(u)=\{u_1,w,w_1,\ldots, w_s\}$, $N_T(v)=\{v_1,z,z_1, \ldots, z_t\}$, here $s\geq 1, t\geq 1$. Then
$F(T)< F(T')$ or $F(T)< F(T'')$, where
\begin{eqnarray*}
  T'&=&T- uw_1- uw_2- \cdots-
uw_s+ vw_1+ vw_2+ \cdots+ uw_s, \\
  T''&=&T- vz_1- vz_2- \cdots-vz_t+
uz_1+ uz_2+ \cdots+ uz_t.
\end{eqnarray*}
\end{lem}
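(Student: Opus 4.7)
The plan is to decompose $F(T)$, $F(T')$, $F(T'')$ over a common structural skeleton and then compare them algebraically. For $1\le i\le s$ let $W_i$ be the component of $T-uw_i$ containing $w_i$, and for $1\le j\le t$ let $Z_j$ be the component of $T-vz_j$ containing $z_j$. Let $R$ be the subtree of $T$ obtained by deleting the vertex sets of all the $W_i$ and $Z_j$; then $R$ still contains $u$, $v$, the entire path $P$, and the unchanged branches rooted at $w$ and $z$, and it sits identically inside each of $T$, $T'$, $T''$, since the switches only reroute edges incident to $u$ and $v$. Set $\alpha=\prod_{i=1}^{s}(f_{W_i}(w_i)+1)$ and $\beta=\prod_{j=1}^{t}(f_{Z_j}(z_j)+1)$; since each factor is at least $2$ and $s,t\ge 1$, one has $\alpha,\beta\ge 2$. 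Let $\hat f(u)$, $\hat f(v)$, $\hat f(uv)$ denote the number of subtrees of $R$ containing respectively $u$ only, $v$ only, and both $u,v$; each is at least $1$.

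Next, I would classify the subtrees of $T$ by their intersection with $\{u,v\}$. A subtree containing $u$ may be augmented independently in each $W_i$ attached to $u$ (giving a factor of $f_{W_i}(w_i)+1$), and if it also contains $v$ then it may be augmented in each $Z_j$ attached to $v$ as well; the analogous statement holds with $u$ and $v$ interchanged. Writing $C$ for the contribution of subtrees avoiding $\{u,v\}$ altogether, which splits across $R-\{u,v\}$, the $W_i-w_i$'s, and the $Z_j-z_j$'s and is identical in $T$, $T'$, $T''$, this bookkeeping should give
\begin{align*}
F(T) &= \hat f(u)\alpha + \hat f(v)\beta + \hat f(uv)\alpha\beta + C,\\
F(T') &= \hat f(u) + \hat f(v)\alpha\beta + \hat f(uv)\alpha\beta + C,\\
F(T'') &= \hat f(u)\alpha\beta + \hat f(v) + \hat f(uv)\alpha\beta + C.
\end{align*}
The coefficient of $\hat f(uv)$ equals $\alpha\beta$ in every case because once both $u$ and $v$ lie in the subtree the entire $u$-$v$ path is present, so every hanging branch—no matter whether attached at $u$ or at $v$—contributes its $f+1$ factor.

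Subtracting yields the clean identities $F(T')-F(T)=(\alpha-1)\bigl(\beta\hat f(v)-\hat f(u)\bigr)$ and $F(T'')-F(T)=(\beta-1)\bigl(\alpha\hat f(u)-\hat f(v)\bigr)$. I would finish by contradiction: if both $F(T')\le F(T)$ and $F(T'')\le F(T)$ held, then $\beta\hat f(v)\le\hat f(u)$ and $\alpha\hat f(u)\le\hat f(v)$, and multiplying these would force $\alpha\beta\,\hat f(u)\hat f(v)\le\hat f(u)\hat f(v)$, contradicting $\alpha\beta\ge 4$ combined with $\hat f(u),\hat f(v)\ge 1$. The main obstacle I expect is the bookkeeping in the decomposition itself—correctly isolating the skeleton $R$, ruling out double counting between $R$ and the branches, and recognizing that subtrees passing through both $u$ and $v$ absorb the full factor $\alpha\beta$ in every variant—after which the whole lemma collapses to the two-line multiplicative comparison above.
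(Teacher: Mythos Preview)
Your argument is correct, and the bookkeeping you worry about is fine: the four-way classification by intersection with $\{u,v\}$ gives exactly the three displayed formulas, and the multiplicative contradiction at the end is clean. One cosmetic slip: the constant $C$ actually splits across $R-\{u,v\}$ and the \emph{full} $W_i$'s and $Z_j$'s (not $W_i-w_i$ and $Z_j-z_j$), since a subtree avoiding $u$ and $v$ may still contain $w_i$ or $z_j$; but you only use that $C$ is the same in all three trees, which it is, so nothing breaks.

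The paper takes a different, more modular route. It forms the same skeleton (called $\hat T$ there, your $R$), compares $f_{\hat T}(u)$ with $f_{\hat T}(v)$, and then invokes Lemma~\ref{lem2.7} (adding branches on one side makes the inequality strict) followed by Corollary~\ref{cor2.4} (the switching lemma) to conclude that moving the appropriate bundle of branches strictly increases $F$. Your approach bypasses those two lemmas entirely by writing down the explicit differences $F(T')-F(T)=(\alpha-1)\bigl(\beta\hat f(v)-\hat f(u)\bigr)$ and $F(T'')-F(T)=(\beta-1)\bigl(\alpha\hat f(u)-\hat f(v)\bigr)$; this makes the lemma self-contained and pinpoints exactly where strictness comes from (namely $\alpha,\beta\ge 2$). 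The paper's version is shorter once its toolkit is in place; yours is more quantitative and would stand on its own without the preceding lemmas.
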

\begin{proof}
Consider the component in $T- uw_1- uw_2- \cdots- uw_s- vz_1- vz_2- \cdots-vz_t$, say $\hat{T}$, which contains both $u$ and $v$.
If $f_{\hat{T}}(u)\leq f_{\hat{T}}(v)$, then by Lemma \ref{lem2.7}, we have
$$
   f_{\tilde{T}}(u)< f_{\tilde{T}}(v),
$$ where $\tilde{T}$ is just the component containing both $u$ and $v$ in the graph $T- uw_1- uw_2- \cdots-
uw_s.$   By Corollary \ref{cor2.4},
we have $F(T)< F(T')$.

If $f_{\hat{T}}(v) < f_{\hat{T}}(u)$, similarly we can also show that $F(T)< F(T'')$. We omit the procedure here.

This completes the proof.
\end{proof}
\begin{lem}\label{lem2.10}
Given a tree $T$ containing a path $P_r=v_1v_2\ldots v_r,$  there exists a vertex $v_i \in V_{P_r}\setminus\{v_1, v_r\}$ such that
\begin{equation}\label{eq:2.3}
f_T(v_1)<\cdots < f_T(v_{i-1})< f_T(v_i)\geq f_T(v_{i+1})> \cdots >f_T(v_r).
\end{equation}
\end{lem}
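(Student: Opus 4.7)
\medskip
\noindent\textbf{Proof plan.} The plan is to establish strict unimodality of the sequence $f_T(v_1),f_T(v_2),\ldots,f_T(v_r)$ by reducing each consecutive comparison to a single scalar inequality and then exploiting that the two scalars move strictly in opposite directions as $k$ grows.

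For each $k\in\{1,\ldots,r-1\}$, delete the edge $v_kv_{k+1}$ and let $L_k, R_k$ denote the components of $T-v_kv_{k+1}$ containing $v_k$ and $v_{k+1}$, respectively. Writing $A_k=f_{L_k}(v_k)$ and $B_k=f_{R_k}(v_{k+1})$, partitioning the subtrees of $T$ through $v_k$ (resp.\ $v_{k+1}$) according to whether they cross the edge $v_kv_{k+1}$ yields
\[
f_T(v_k)=A_k(1+B_k),\qquad f_T(v_{k+1})=B_k(1+A_k),
\]
so $f_T(v_k)\ge f_T(v_{k+1})$ iff $A_k\ge B_k$, with equality iff $A_k=B_k$; equivalently, this is Lemma \ref{lem2.2} specialised to the single-edge path from $v_k$ to $v_{k+1}$.

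I next show that $A_k$ is strictly increasing and $B_k$ strictly decreasing in $k$. The tree $L_{k+1}$ is obtained from $L_k$ by joining, through the edge $v_kv_{k+1}$, the component $C_{k+1}$ of $T-v_kv_{k+1}-v_{k+1}v_{k+2}$ that contains $v_{k+1}$; applying the product identity of the previous paragraph inside $L_{k+1}$ to the edge $v_kv_{k+1}$ gives $A_{k+1}=f_{C_{k+1}}(v_{k+1})(1+A_k)\ge 1+A_k>A_k$, and a symmetric argument yields $B_{k+1}<B_k$. Hence $g(k):=A_k-B_k$ is strictly increasing on $\{1,\ldots,r-1\}$, so its sign flips from negative to nonnegative at most once. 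Letting $i$ be the smallest index with $g(i)\ge 0$, we obtain $g(k)<0$ for $k<i$ and $g(k)>g(i)\ge 0$ for $k>i$, which through the equivalence above chains to precisely
\[
f_T(v_1)<\cdots<f_T(v_{i-1})<f_T(v_i)\ge f_T(v_{i+1})>\cdots>f_T(v_r).
\]

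To place $v_i$ strictly inside the path one additionally needs $g(1)<0$ and $g(r-1)>0$; this is where the endpoint hypothesis enters. In the setting in which the lemma will be applied, $v_1$ and $v_r$ are leaves of $T$, and then Lemma \ref{lem2.6} immediately delivers $f_T(v_1)<f_T(v_2)$ and $f_T(v_{r-1})>f_T(v_r)$, forcing $2\le i\le r-1$. The main obstacle I expect is executing the strict monotonicity of $A_k$ and $B_k$ uniformly: the whole argument rests on bundling $f_{C_{k+1}}(v_{k+1})\ge 1$ with the $+1$ in the product formula in order to upgrade a weak inequality to a strict one, and the symmetric bookkeeping must be carried out at both ends of the edge being absorbed.
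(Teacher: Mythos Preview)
Your argument is correct and reaches the same conclusion as the paper, but via a different decomposition. The paper works with three consecutive path vertices at once: writing $X,Y,Z$ for the components of $T$ after deleting both edges $v_{k-1}v_k$ and $v_kv_{k+1}$, it computes directly that
\begin{equation*}
2f_T(v_k)-f_T(v_{k-1})-f_T(v_{k+1})=2f_Y(v_k)+\bigl(f_X(v_{k-1})+f_Z(v_{k+1})\bigr)\bigl(f_Y(v_k)-1\bigr)>0,
\end{equation*}
so the sequence $(f_T(v_k))_k$ is strictly concave along $P_r$, and unimodality with an interior peak follows at once (invoking Lemma~\ref{lem2.6} to rule out $i\in\{1,r\}$, exactly as you do). You instead cut one edge at a time and show that $g(k)=A_k-B_k$ is strictly increasing; since $f_T(v_k)-f_T(v_{k+1})=A_k-B_k$, monotonicity of $g$ is precisely strict concavity of $(f_T(v_k))_k$, so the two arguments are close cousins. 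The paper's three-term identity is a one-line computation that avoids your recursive bookkeeping $A_{k+1}=f_{C_{k+1}}(v_{k+1})(1+A_k)$; your route, on the other hand, makes the mechanism (left count strictly grows, right count strictly shrinks as $k$ increases) more transparent and would adapt more readily if one wanted quantitative information about where the peak sits. Both proofs share the unstated hypothesis that $v_1$ and $v_r$ are leaves of $T$, which is what Lemma~\ref{lem2.6} needs; you flag this explicitly, while the paper simply cites Lemma~\ref{lem2.6} without comment.
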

\begin{proof}
Consider three vertices $x, y, z$ such that $xy, yz \in E_T$. Let $X,
Y , Z$, respectively, denote the components containing $x, y, z$ after the removal of the edges $xy$ and $yz$
from $T$. Observe the identities
\begin{eqnarray*}
  f_T(x)&=&f_X(x)+ f_X(x)f_Y(y)+ f_X(x)f_Y(y)f_Z(z),\\
  f_T(z)&=&f_Z(z)+ f_Z(z)f_Y(y)+ f_Z(z)f_Y(y)f_X(x),\\
  f_T(y)&=&f_Y (y)+ f_X(x)f_Y(y)+ f_Z(z)f_Y(y)+f_X(x)f_Y(y)f_Z(z).
\end{eqnarray*}
This gives
\begin{equation}\label{eq:2.4}
2f_T(y)-f_T(x)-f_T(z) = 2f_Y(y)+(f_X(x)+ f_Z(z))(f_Y(y)-1)> 0.
\end{equation}
Let
$$
 i=\min\{j:\ 1\le j\le r,\, f_T(v_j)\ge f_T(u), \, u\in V_{P_r}\}.
$$
by Lemma \ref{lem2.6}, $i\not=1, r.$ Hence, we have
$$
f_T(v_i)\geq f_T(v_{i+1}),\ \ \ f_T(v_i)> f_T(v_{i-1}).
$$

Next consider three consecutive vertices $v_i, v_{i+1},v_{i+2}$ on $P_r,$ in view of (\ref{eq:2.4}) we have
$$
2f_T(v_{i+1})-f_T(v_i)-f_T(v_{i+2})> 0.
$$
Combining with $f_T(v_i)\geq f_T(v_{i+1})$ yields
$$
f_T(v_i)\geq f_T(v_{i+1})>f_T(v_{i+2}).
$$
Repeated as above we obtain
\begin{equation}\label{eq:2.5}
f_T(v_i)\geq f_T(v_{i+1})> \cdots >f_T(v_r).
\end{equation}

Similarly, we obtain
\[\label{eq:2.6}
f_T(v_1)<\cdots < f_T(v_{i-1})< f_T(v_i).
\]
Hence, (\ref{eq:2.5}) and (\ref{eq:2.6}) imply (\ref{eq:2.3}) immediately.
\end{proof}
\section{\normalsize Proof of Theorem 1.1}\setcounter{equation}{0}

In this section, we shall determine sharp upper and lower bounds on the total number of subtrees of $n$-vertex tree with $k$ pendants.\vspace{2mm}

\noindent{\bf Proof of Theorem 1.1.}\ \ (i)\ First we enumerate the total number of subtrees of $T_n^k$.
Consider the unique vertex, say $v_0$, the center of $T_n^k$ whose degree is $k$, we have
\begin{align}\label{eq:3.1}
F(T_n^k)=f_{T_n^k}(v_0)+F(T-v_0)
        =f_{T_n^k}(v_0)+F(iP_{\lfloor\frac{n-1}{k}\rfloor}\cup jP_{\lceil\frac{n-1}{k}\rceil}),
\end{align}
where $i+j=k$ and $n-1\equiv j \pmod{k}$.
On the one hand,
$$
f_{T_n^k}(v_0)=\left(\left\lfloor\frac{n-1}{k}\right\rfloor+1\right)^i\left(\left\lceil\frac{n-1}{k}\right\rceil+1\right)^j.
$$
On the other hand, by Lemma \ref{lem2.1} we have
$$F(iP_{\lfloor\frac{n-1}{k}\rfloor}\cup jP_{\lceil\frac{n-1}{k}\rceil})=i{\lfloor\frac{n-1}{k}\rfloor+1\choose{2}}+j{\lceil\frac{n-1}{k}\rceil+1\choose{2}},$$
where $i+j=k$ and $j\equiv n-1\pmod{k}$. Together with (\ref{eq:3.1}), we have
$$F(T_n^k)=\left(\left\lfloor\frac{n-1}{k}\right\rfloor+1\right)^i\left(\left\lceil\frac{n-1}{k}\right\rceil+1\right)^j+i{\lfloor\frac{n-1}{k}\rfloor+1 \choose{2}}+j{\lceil\frac{n-1}{k}\rceil+1\choose{2}},$$
as desired.

Now we show that $T_n^k$ is the unique graph which maximizes the total number of subtrees among $\mathscr{T}_n^k.$ Choose $T\in \mathscr{T}_n^k$
such that the total number of its subtrees is as large as possible.

If $k=2$ or, $k=n-1$, it is easy to see that $\mathscr {T}_n^k=\{T_n^k\}$, our result follows immediately. Hence, in what follows we consider
$2<k<n-1$.

We are to show that $T$ is a spider, i.e., $T$ contains a unique vertex of degree larger than 2. Assume to the contrary that $T$
contains at least $2$ vertices of degree greater than 2. By Lemma 2.9, there exists an $n$-vertex tree $T'\in \mathscr{T}_n^k$
such that $F(T)<F(T')$, a contradiction to the choice of $T$. Hence, we assume that $v_0$ is the unique vertex of degree greater than 2.

In order to complete the proof, it suffices to show that all the legs attached to $v_0$ of $T$ are almost equal lengths.
Let $PV(T)=\{u_1, u_2, \ldots, u_k\}$ be the set of all pendant vertices of $T$. We are to show that for any $u_i, u_j\in PV(T)$, we have
$|d_T(v_0,u_i)-d_T(v_0,u_j)| \leq 1.$ Assume to the contrary that there exist two pendant vertices, say $u_t, u_l$, in $PV(T)$ such that
\begin{equation}\label{eq:3.2}
|d_T(u_0,u_t)-d_T(u_0,u_l)|
\ge 2.
\end{equation}
Denote the unique path connecting $u_t$ and $u_l$ by $P_s= w_1 w_2 \ldots w_{i-1}w_iw_{i+1}\ldots
w_s,$ where $w_1=u_t, w_s=u_l$ and $w_i=u_0, 1 \leq i \leq s$. In view of (\ref{eq:3.2}), we have
$$
\text{$u_0=w_i \neq w_{\lfloor\frac{s+1}{2}\rfloor}$\ \ \  and\ \ \ $u_0=w_i \neq w_{\lceil
\frac{s+1}{2}\rceil}$}.
$$
Hence, by Lemma 2.6 there exists an $n$-vertex tree $T'' \in \mathscr{T}_n^k$ such that
$F(T)<F(T'')$, a contradiction to the choice of $T$.

This completes the proof of Theorem 1.1(i).\vspace{2mm}

(ii)\ If $k=2$ or, $k=n-1$, it is easy to see that $\mathscr {T}_n^k=\{P_{n-k-1}(\lfloor\frac{k}{2}\rfloor,\lceil\frac{k}{2}\rceil)\}$, our result follows immediately. Hence, in what follows we consider $2<k<n-1$. In order to complete the proof, it suffices to show the following claims.

\begin{claim}
If $T$ minimizes the total number of subtrees in $\mathscr{T}_n^k, T\cong P_{n-k}(a,b)$, where $a \geq b \geq 1$ and $a+b=k$.
\end{claim}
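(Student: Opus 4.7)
The plan is to fix a minimizer $T\in\mathscr{T}_n^k$ (with $2<k<n-1$) and a diameter path $P=v_0v_1\cdots v_d$ of $T$, and to show that every off-path vertex is a pendant attached at $v_1$ or $v_{d-1}$; this forces $T\cong P_{n-k}(a,b)$. For $1\le i\le d-1$ write $X_i$ for the subtree of $T$ consisting of $v_i$ together with all vertices of $V_T\setminus V_P$ reachable from $v_i$ without crossing a $P$-edge.

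The core step is: if $2\le i\le d-2$, then $|V_{X_i}|=1$. Suppose not and set $T^*:=T-(V_{X_i}\setminus\{v_i\})$; the path $P$ survives in $T^*$ with $v_0,v_d$ still leaves. Applying Lemma~\ref{lem2.10} (together with Lemma~\ref{lem2.6} at the leaf endpoints) to $T^*$ along $P$, we obtain a peak index $p\in\{1,\dots,d-1\}$ and strict monotonicity $f_{T^*}(v_1)<\cdots<f_{T^*}(v_p)$ and $f_{T^*}(v_p)\ge f_{T^*}(v_{p+1})>\cdots>f_{T^*}(v_{d-1})$. Taking $v_\ast:=v_1$ if $i\le p$ and $v_\ast:=v_{d-1}$ otherwise gives the strict inequality $f_{T^*}(v_i)>f_{T^*}(v_\ast)$ (note that $v_i\ne v_\ast$ since $i\in\{2,\dots,d-2\}$). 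Viewing $X_i$ as a rooted tree at $v_i$, which has at least two vertices, let $T'$ be the tree obtained from $T^*$ and $X_i$ by identifying the root with $v_\ast$; Corollary~\ref{cor2.4} applied to $T^*$ with $x=v_i$ and $y=v_\ast$ then yields $F(T')<F(T)$.

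To close the contradiction, one checks $T'\in\mathscr{T}_n^k$. Only the degrees of $v_i$ and $v_\ast$ change between $T$ and $T'$: $v_i$ drops from degree $\ge 3$ to degree $2$ (retaining the $P$-neighbors $v_{i-1},v_{i+1}$), and $v_\ast$ rises from degree $\ge 2$ to degree $\ge 3$; neither changes leaf status. All other vertices — the leaves inside $X_i$, the endpoints $v_0,v_d$, and any pendants at other $v_j$'s — keep their roles, so $T$ and $T'$ share the same pendant set, contradicting the minimality of $T$. Next, for $i\in\{1,d-1\}$ the diameter bound $\mathrm{diam}(T)=d$ forbids any vertex at distance $\ge 2$ from $v_i$ inside $X_i$, so $X_1\setminus\{v_1\}$ and $X_{d-1}\setminus\{v_{d-1}\}$ consist entirely of pendants adjacent to $v_1$ or $v_{d-1}$. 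Setting $a:=\deg_T(v_1)-1$ and $b:=\deg_T(v_{d-1})-1$ (counting $v_0$ and $v_d$ among these pendants) gives $a,b\ge 1$, $a+b=k$, and $T\cong P_{n-k}(a,b)$ after relabeling so that $a\ge b$.

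The main obstacle is the auxiliary construction of $T^*$: one must pass to $T^*$ so that Lemma~\ref{lem2.10} places the $f$-values on a common base and Corollary~\ref{cor2.4} can compare attachments of the rooted branch $X_i$, while simultaneously verifying that relocating the branch from $v_i$ to $v_1$ or $v_{d-1}$ leaves the leaf count untouched — a subtlety that would fail if one attempted to reattach the branch directly at $v_0$ or $v_d$.
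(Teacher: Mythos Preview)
Your proof is correct and follows essentially the same approach as the paper: remove the offending branch, invoke Lemma~\ref{lem2.10} on the remaining tree along the diameter path to get a unimodal profile of $f$-values, and use Corollary~\ref{cor2.4} to relocate the branch toward an end of the path, contradicting minimality. Your version is in fact a bit more careful than the paper's, since you explicitly verify that the leaf count is preserved under the move (the paper merely asserts $T',T''\in\mathscr{T}_n^k$) and you spell out the diameter argument forcing $X_1$ and $X_{d-1}$ to consist only of pendants.
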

\noindent{\bf Proof of Claim 1}\ \
If diam$(T)=3$, the claim follows immediately. Hence we consider the trees whose diameter is larger than 3.
Suppose that $P_r=v_1\ldots v_r (r \geq 5)$ is one of the longest path in $T$, we are to show that
$d_T(v_3)=d_T(v_4)=\cdots =d_T(v_{r-2})=2$. Assume to the contrary that there exists $v\in \{v_3, v_4, \ldots,v_{r-2}\}$ such that
$d_T(v)\ge 3.$ Let
$$
i=\min\{j:\ d_T(v_j)\ge 3,\ \ \ 3\leq j \leq r-2\},\ \ \ \ N_T(v_i)=\{v_{i-1}, v_{i+1}, z_1,z_2,\ldots, z_s\},\ \  s\geq1.
$$
After the deletion of all the vertices $z_1,z_2,\ldots, z_s$ from $T$, let $T_0$ denote the component containing $v_i$. By Lemma \ref{lem2.10}, there exists $v_t\in V_{P_r}$ such that
$$
f_{T_0}(v_1)<\cdots < f_{T_0}(v_{t-1})< f_{T_0}(v_t)\geq f_{T_0}(v_{t+1})> \cdots >f_{T_0}(v_r).
$$
If $t< i,$ then  we have $f_{T_0}(v_i)>f_{T_0}(v_{r-1})$.
By Corollary \ref{cor2.4}, we have
\[\label{eq:3.7}
 F(T) > F(T'),
\]
where
$$T'=T-v_iz_1-\cdots -v_iz_s+v_{r-1}z_1+\cdots+v_{r-1}z_s.$$

If $t \ge i,$ then  we have $f_{T_0}(v_i)>f_{T_0}(v_2)$.
By Corollary \ref{cor2.4}, we have
\[\label{eq:3.8}
 F(T) > F(T''),
\]
where
$$T''=T-v_iz_1-\cdots -v_iz_s+v_2z_1+\cdots+v_2z_s.$$

It is easy to see that $T',\, T''\in \mathscr{T}_n^k.$
Hence, (\ref{eq:3.7}) (resp. (\ref{eq:3.8})) is a contradiction to the choice of $T$. This completes the proof of Claim 1.
\qed
\begin{claim}
For positive integers $a,b$ with $a\geq b$ and $a+b=k$ one has
\begin{equation}\label{eq:3.9}
F(P_{n-k}(a,b))=(2^a+2^b)(n-k-1)+2^k+k+{n-k-1\choose{2}}
\end{equation}
and if $a-b\geq2$, then
\begin{equation}\label{eq:3.10}
F(P_{n-k}(a,b))> F(P_{n-k}(a-1,b+1)).
\end{equation}
\end{claim}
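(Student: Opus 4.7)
The plan is to prove (\ref{eq:3.9}) by a direct enumeration of the subtrees of $P_{n-k}(a,b)$, and then to deduce (\ref{eq:3.10}) by simply subtracting two instances of the formula. Write the spine of $P_{n-k}(a,b)$ as $v_1v_2\cdots v_{n-k}$, with $a$ pendant leaves $u_1,\ldots,u_a$ attached at $v_1$ and $b$ pendant leaves $w_1,\ldots,w_b$ attached at $v_{n-k}$. The key observation is that the only vertices of degree larger than $2$ are $v_1$ and $v_{n-k}$ (note $n-k\ge 2$ in the range $2<k<n-1$ we are working in), so every subtree is completely determined by which of these two vertices it contains, which stretch of the spine it covers, and which pendant leaves it picks up at each end.

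Concretely, I would partition the subtrees into five disjoint classes. Class (a) consists of the $a+b=k$ singleton pendant leaves. Class (b) consists of the subpaths of the internal path $v_2v_3\cdots v_{n-k-1}$, which contain neither $v_1$ nor $v_{n-k}$; there are $\binom{n-k-1}{2}$ of these. Class (c) consists of subtrees containing $v_1$ but not $v_{n-k}$: such a subtree is a spine segment $v_1v_2\cdots v_j$ with $1\le j\le n-k-1$ (giving $n-k-1$ choices) together with an arbitrary subset of $\{u_1,\ldots,u_a\}$ (giving $2^a$ choices), for a total of $2^a(n-k-1)$. Class (d) is symmetric to class (c) and contributes $2^b(n-k-1)$ subtrees containing $v_{n-k}$ but not $v_1$. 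Class (e) consists of subtrees containing both endpoints; these must include the entire spine together with independently chosen subsets of pendants at each end, giving $2^a\cdot 2^b=2^k$ subtrees. Summing over (a)--(e) yields exactly the expression in (\ref{eq:3.9}).

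For (\ref{eq:3.10}), since $(a-1)+(b+1)=a+b=k$, the three terms $\binom{n-k-1}{2}$, $2^k$ and $k$ in (\ref{eq:3.9}) are unchanged when $(a,b)$ is replaced by $(a-1,b+1)$, so
\[
F(P_{n-k}(a,b))-F(P_{n-k}(a-1,b+1))=\bigl(2^a+2^b-2^{a-1}-2^{b+1}\bigr)(n-k-1)=\bigl(2^{a-1}-2^b\bigr)(n-k-1).
\]
The hypothesis $a-b\ge 2$ gives $a-1\ge b+1>b$, whence $2^{a-1}>2^b$; together with $n-k-1\ge 1$ (ensured by $k<n-1$), this makes the right-hand side strictly positive. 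The only thing requiring care in the enumeration is remembering the contribution of the standalone pendant-leaf subtrees (the $+k$ summand), which is easy to overlook if one classifies purely by which spine-endpoint is contained; apart from that, the argument is entirely mechanical and presents no real obstacle.
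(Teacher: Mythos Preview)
Your proof is correct and follows essentially the same approach as the paper: the paper decomposes $F(P_{n-k}(a,b))$ as $f_{P_{n-k}(a,b)}(v_1/v_{n-k})+f_{P_{n-k}(a,b)}(v_1*v_{n-k})+f_{P_{n-k}(a,b)}(v_{n-k}/v_1)+F(P_{n-k}(a,b)-v_1-v_{n-k})$, which is exactly your classes (c), (e), (d), and (a)$+$(b) respectively, and then derives (\ref{eq:3.10}) by the same subtraction you perform.
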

\noindent{\bf Proof of Claim 2}\ \
For convenience, assume that $d_{P_{n-k}(a,b)}(v_1)=a$ and $d_{P_{n-k}(a,b)}(v_{n-k})=b$. Then we have
\begin{equation}\label{eq:3.11}
\begin{split}
F(P_{n-k}(a,b))=&f_{P_{n-k}(a,b)}(v_1/v_{n-k})+f_{P_{n-k}(a,b)}(v_1*v_{n-k})+f_{P_{n-k}(a,b)}(v_{n-k}/v_1)\\
               &+F(P_{n-k}(a,b)-v_1-v_{n-k}).
\end{split}
\end{equation}

By direct calculation, we have
\[
   \text{$f_{P_{n-k}(a,b)}(v_1/v_{n-k})=2^a(n-k-1)$\ \ \  and\ \ \ $f_{P_{n-k}(a,b)}(v_{n-k}/v_1)=2^b(n-k-1)$.}
\]
It is straightforward to check that the total number of subtrees of $P_{n-k}(a,b)$ containing both $v_1$ and $v_{n-k}$
is equal to the total number of subtrees of $K_{1,a+b}$ each contains the center of $K_{1,a+b}$. Hence, we have
\[f_{P_{n-k}(a,b)}(v_1*v_{n-k})=2^{a+b}=2^k.
\]

On the other hand,
\[\label{eq:3.14}
F(P_{n-k}(a,b)-v_1-v_{n-k})=F((a+b)P_1\cup P_{n-k-2})=k+{n-k-1\choose{2}}.
\]

In view of (\ref{eq:3.11})-(\ref{eq:3.14}), (\ref{eq:3.9}) holds.

By (\ref{eq:3.9}), we have
$$
F(P_{n-k}(a,b))- F(P_{n-k}(a-1,b+1))=(2^a+2^b-2^{a-1}-2^{b+1})(n-k-1)=(2^{a-1}-2^b)(n-k-1).
$$
Note that $a-b\ge 2$, hence $(2^{a-1}-2^b)(n-k-1)>0$, i.e., $
F(P_{n-k}(a,b))>F(P_{n-k}(a-1,b+1))$, as desired.
\qed

By Claims 1 and 2, Theorem 1.1(ii) follows immediately.\qed

\section{\normalsize Proof of Theorem 1.2}\setcounter{equation}{0}
\noindent{\bf Proof of Theorem 1.2}\ In view of Theorem 3.7 in \cite{25}, we know that $\hat{T}_{n,i}^d$, $i=\lfloor\frac{d+2}{2}\rfloor=\lfloor\frac{d}{2}\rfloor+1$ or $i=\lceil\frac{d+2}{2}\rceil=\lceil\frac{d}{2}\rceil+1$, is the unique graph maximizing the total number of subtrees among $\mathscr{T}_{n,d}$. In order to complete the proof,
it suffices to show that
\[
F(\hat{T}_{n,i}^d)=2^{n-d-1}\left(\left\lfloor \frac{d}{2}\right \rfloor+1\right)\left(\left\lceil \frac{d}{2} \right\rceil+1\right)+ {\lfloor \frac{d}{2} \rfloor+1\choose{2}}{\lceil \frac{d}{2} \rceil+1\choose{2}}+n-d-1,
\]
where $i=\lfloor\frac{d}{2}\rfloor+1$ or $i=\lceil\frac{d}{2}\rceil+1$.

In fact, it is easy to see that
\[
   F(\hat{T}_{n,i}^d)=f_{\hat{T}_{n,i}^d}(x)+F(\hat{T}_{n,i}^d-x)
\]
for any $x \in V_{\hat{T}_{n,i}^d}$.

Without loss of generality, consider $x:=v_{\lfloor \frac{d}{2} \rfloor+1}$ whose neighbor contains just $n-d-1$ leaves. By Lemma \ref{lem2.5}, we have
$$
  f_{P_{d+1}}(x)=\left(\left\lfloor \frac{d}{2} \right\rfloor+1\right)\left(d+1+1-(\left\lfloor \frac{d}{2} \right\rfloor+1)\right)=\left(\left\lfloor \frac{d}{2}\right \rfloor+1\right)\left(\left\lceil \frac{d}{2} \right\rceil+1\right)
$$
which leads to
\[
  f_{\hat{T}_{n,i}^d}(x)=2^{n-d-1}\left(\left\lfloor \frac{d}{2}\right \rfloor+1\right)\left(\left\lceil \frac{d}{2} \right\rceil+1\right).
\]

On the other hand,
$$\hat{T}_{n,i}^d-x=(n-d-1)P_1\cup P_{\lfloor \frac{d
}{2} \rfloor}\cup P_{\lceil \frac{d}{2} \rceil}.$$
By Lemma \ref{lem2.1}, we have
\[F(\hat{T}_{n,i}^d-x)=n-d-1+{{\lfloor \frac{d}{2} \rfloor}+1\choose{2}}{{\lceil \frac{d}{2} \rceil}+1\choose{2}}.\]

In view of Eqs. (4.2)-(4.4), Eq. (4.1) follows immediately.
\qed

\section{\normalsize Proof of Theorem 1.3}\setcounter{equation}{0}
In this section, we prove Theorem 1.3. For convenience, denote by $\iota(T)$ the number of non-pendant vertices in $T$.\vspace{2mm}

{\noindent\bf Proof Theorem 1.3} (i)\ First we show that $D(p,q)$ is the tree in $\mathscr{P}_n^{p,q}$
which has the largest number of subtrees. For any $T\in \mathscr{P}_n^{p,q}$. If $p=1$, $\mathscr{P}_n^{p,q}= \{K_{1,n-1}\}= \{D(1,n-1)\}$.
Our result holds in this case. Hence, in what follows, we consider $p\ge 2.$ In order to determine the structure of the extremal graph, say $T$, in this case,
it suffice to show that $\iota(T)=2.$

Hence, we assume to the contrary that $\iota(T)\ge 3.$

Choose three vertices, say $u,v,w$, such that each of them is of degree at least 3. Let $V_T=V_1\cup V_2$. It is straightforward to check that in $\{u,v,w\}$, there exist two elements are in $V_1$ or $V_2$. We assume, without loss of generality, that $u,v \in V_1$ with $N_T(u)=\{u_1, z_1,\ldots ,z_t\}, N_T(v)=\{u_{2k-1}, r_1,\ldots ,r_s\}$, $t\ge 1,  s\ge 1$ and the unique path joining $u$ and $v$ is $P=uu_1\ldots u_{2k-1}v$. Let $X_u$ be the component that contains $u$ in $T-E_P$ and $Y_v$ the component that contains $v$ in $T-E_P$. Let $T'$ be the component that contains $u$ in $T- uz_1- \cdots -uz_t- vr_1- \cdots- vr_s$.

If $f_{T'}(u) \ge f_{T'}(v)$, by Lemma \ref{lem2.7} we have
\[
f_{T''}(u) > f_{T''}(v),
\]
where $T''$ is obtained by identifying $u$ of $T'$ with $u$ of $X_u$.
Let $T^*$ be the tree obtained by identifying $u$ of $T''$ with $v$ of $Y_v$. Note that $u$ and $v$ are in $V_1$, hence we have $T^* \in \mathscr{P}_n^{p,q}$. On the other hand, notice that $Y_v$ is not a single vertex, together with (5.1) and Corollary \ref{cor2.4}, we have $F(T)< F(T^*)$, a contradiction to the choice of $T$.

Similarly, if $f_{T'}(u) < f_{T'}(v)$, we can also show there exists a tree  $\hat{T} \in \mathscr{P}_n^{p,q}$ such that $F(T)< F(\hat{T})$, a contradiction. We omit the procedure here.
Hence, we get that $\iota(T)=2$, i.e., $T\cong D(p,q),$ as desired.

Now we show that
\begin{align}
 F(D(p,q)) = 2^{n-2}+2^{p-1}+2^{q-1}+n-2.
\end{align}

In fact, let $d_{D(p,q)}(u)=p$ and $d_{D(p,q)}(v)=q$, we have
$$
F(D(p,q)=f_{D(p,q)}(u/v)+f_{D(p,q)}(u*v)+f_{D(p,q)}(v/u)+F(D(p,q)-v-u).
$$
It is easy to see that $f_{D(p,q)}(u/v)=2^{p-1}, f_{D(p,q)}(v/u)=2^{q-1}$ and any subtree that contains both $u$ and $v$ can be considered be the subtree that contains the center of $K_{1,n-2}$, so we have $f_{D(p,q)}(u*v)=2^{n-2}$. On the other hand, as
$$
F(D(p,q)-v-u)=F((p-1)P_1\cup(q-1)P_1)=n-2
$$

Therefore, Eq.(5.2) holds. This completes the proof of Theorem 1.3 (i).\vspace{2mm}

(ii)\
Suppose $T'$ is the tree in $\mathscr{P}_n^{p,q} \setminus \{D(p, q)\}$ with $q\ge p\ge 2$ which has the largest number of subtrees. First we show that $\iota(T')= 3$.
Note that $T' \not \cong D(p, q)$, we know that $\iota(T) \neq 1, 2$. If $\iota(T') > 3$, then by a similar discussion as in the proof of Theorem 1.3 (i),
there exists a tree $T'' \in \mathscr{P}_n^{p,q}$ such that $\iota(T'')= \iota(T')- 1 \ge 3$ and $F(T') < F(T'')$. Note that $\iota(T'') \ge 3$, hence $T'' \in \mathscr{P}_n^{p,q}\setminus \{D(p, q)\}$. Therefore, we find a tree $T''$ in $\mathscr{P}_n^{p,q}\setminus \{D(p, q)\}$ such that $F(T') < F(T'')$, a contradiction to the choice of $T'$. Hence, $\iota(T') =3.$

Let $T(x, y, z)$ be the graph obtained by identifying one leaf of $K_{1, x+ 1}$ (resp. $K_{1, z+ 1}$) with the center of $K_{1, y}$, which is depicted in Fig. 6. As $T'\in \mathscr{P}_n^{p,q} \setminus \{D(p, q)\}$ with $\iota(T')=3$, we have
$$
  T'\cong T(a, p-2, b),\ \ \ a \ge b \ge 1, a+b+1=q,
$$
or
$$
  T'\cong T(a', q-2, b'),\ \ \ \ a' \ge b' \ge 1, a'+ b'+ 1=p.
$$
\begin{figure}[h!]
\begin{center}
  \psfrag{x}{$x$}\psfrag{y}{$y$}\psfrag{z}{$z$}
  \psfrag{a}{$a$}\psfrag{b}{$b$}\psfrag{v}{$v$}
  \psfrag{w}{$w$}\psfrag{u}{$u$}
\psfrag{c}{$T(x, y, z)$}\psfrag{d}{$T(a, 0, b)(a \ge b \ge 1)$}
  \includegraphics[width=110mm]{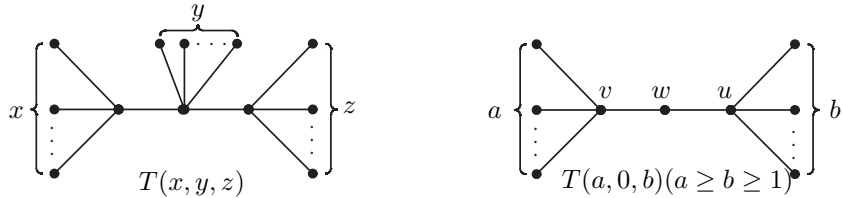}\\
  \caption{Trees $T(x, y, z)$ and $T(a, 0, b)(a \ge b \ge 1).$ }
\end{center}
\end{figure}
\setcounter{claim}{0}
\begin{claim}
If $T'\cong T(a, p-2, b)\ ($or $T'\cong T(a', q-2, b'))$, then $b=1$\ $($or $b'=1).$
\end{claim}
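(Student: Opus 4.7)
The plan is to argue by contradiction. Suppose $T' \cong T(a, p-2, b)$ with $a \ge b \ge 2$, and consider the competitor $T^{*} := T(a+1, p-2, b-1)$ obtained from $T'$ by detaching one pendant from $w$ and reattaching it to $u$.

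First, I would verify that $T^{*} \in \mathscr{P}_n^{p,q} \setminus \{D(p,q)\}$. The bipartition of $T(x, p-2, z)$ puts $\{v\}$ together with the $x+z$ pendants of $u$ and $w$ in one class (of size $1+x+z$), and $\{u, w\}$ together with the $p-2$ pendants of $v$ in the other (of size $p$). Because $u$ and $w$ lie in the same bipartition class, moving a pendant between them preserves both class sizes; together with $a + b + 1 = q$, this shows $T^{*}$ has a $(p,q)$-bipartition. The assumption $b - 1 \ge 1$ ensures $w$ remains non-pendant in $T^{*}$, so $\iota(T^{*}) = 3$ and $T^{*} \ne D(p,q)$.

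Next I would compute $F(T(x,y,z))$ directly by partitioning the subtrees according to which subset $S \subseteq \{u,v,w\}$ of the three internal vertices they contain. For $S = \emptyset$ the subtree is a single pendant, giving $x+y+z$ in total. For each of the six nonempty admissible $S$ (the set $\{u,w\}$ is excluded by connectivity), the contribution is the product $\prod_{s \in S} 2^{\pi(s)}$, where $\pi(u)=x$, $\pi(v)=y$, $\pi(w)=z$. Summing yields
\begin{equation*}
F(T(x,y,z)) = x+y+z + 2^{x} + 2^{y} + 2^{z} + 2^{x+y} + 2^{y+z} + 2^{x+y+z}.
\end{equation*}
Substituting the two trees and cancelling common terms, the difference telescopes to
\begin{equation*}
F(T^{*}) - F(T') = (2^{a} - 2^{b-1})(1 + 2^{p-2}) > 0,
\end{equation*}
since $a \ge b \ge 2$. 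This contradicts the extremality of $T'$, so $b = 1$. The parallel case $T' \cong T(a', q-2, b')$ is handled by the same argument with the roles of $p$ and $q$ swapped.

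I do not expect any significant obstacle. The only step requiring care is the bipartition bookkeeping in the first paragraph, where it is crucial that $u$ and $w$ sit on the same side so that the pendant switch is automatically class-preserving; once that is in hand, the rest is a direct subtree count and an elementary comparison of powers of two. The boundary case $p = 2$ (so $y = 0$) poses no issue, since the formula still holds with $2^{0}=1$.
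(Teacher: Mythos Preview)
Your argument is correct and takes a genuinely different route from the paper. The paper proceeds structurally: it strips $b-1$ pendants from the smaller hub, invokes Lemma~\ref{lem2.2} to compare $f$-values at the two outer hubs in the reduced tree, and then applies the switching principle (Corollary~\ref{cor2.4}) to conclude that reattaching those $b-1$ pendants at the larger hub strictly increases $F$, landing directly at $T(a+b-1,p-2,1)$. You instead derive the closed formula
\[
F(T(x,y,z)) = x+y+z + 2^{x}+2^{y}+2^{z} + 2^{x+y}+2^{y+z}+2^{x+y+z}
\]
and compare $T(a,p-2,b)$ with the one-step shift $T(a+1,p-2,b-1)$ by a straight subtraction. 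Your approach is more elementary and self-contained (it does not appeal to Lemmas~\ref{lem2.2}--\ref{lem2.7} or Corollary~\ref{cor2.4}), and the explicit formula it produces is in fact reusable for the computation of $F(B(p,q))$ in Claim~2. The paper's approach, on the other hand, fits the overall methodology of the article and avoids any enumeration. Your bipartition check---that $u$ and $w$ lie in the same class, so the pendant transfer preserves the $(p,q)$-bipartition and keeps $\iota=3$---is exactly the point the paper leaves implicit, so it is good that you spelled it out.
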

\noindent{\bf Proof of Claim 1}\ \
We only show that if $T'\cong T(a, p-2, b)$, we have $b=1.$ Similarly, we can also show if
$T'\cong T(a', q-2, b')$, then we have $b'=1$. We omit the procedure for the latter here.

Assume $b>1,$ let $u,v$ be two non-adjacent vertices of degree at least 2 in $T(a,p-2,b)$ with $N_{T(a,p-2,q)}(v)=\{w,w_1,\ldots,w_a\}$ and $N_{T(a,p-2,b)}(u)=\{w,z_1,z_2,\ldots, z_b\}$. By Lemma 2.2, as $f_{K_{1,a}}(v)\ge f_{K_{1,b}}(u)$, we have
\[
  f_{T(a,p-2,b)}(v)\ge f_{T(a,p-2,b)}(u).
\]
Let $T^*=T(a,p-2,b)-\{z_2,z_3,\ldots,z_b\}$. Then we have
\[
f_{T^*}(v)>f_{T^*}(u),
\]
otherwise $f_{T^*}(v)\le f_{T^*}(u)$. By Lemma \ref{lem2.7}, we have
$$
  f_{T(a,p-2,b)}(v)< f_{T(a,p-2,b)}(u),
$$
which contradicts (5.3). Hence, the inequality in (5.4) holds.

On the other hand, $T(a+b-1,p-2,1)$ can be obtained by identifying the vertex $v$ in $T^*$ with the center vertex of $K_{1,b-1}$; while
$T(a,p-2,b)$ can be obtained by identifying the vertex $u$ in $T^*$ with the center vertex of $K_{1,b-1}$. By Corollary \ref{cor2.4}, we have
$$
F(T(a+b-1,p-2,1))>F(T(a,p-2,b)),
$$
which contradicts the choice of $T' \,(=T(a,p-2,b)).$ Hence, $b=1$.
\qed

If $p= 2$, by Claim 1, we have $T'\cong D(2,q),$ or $T'\cong B(2,q)$. Note that $T'\in \mathscr{P}_n^{2,q}\setminus \{D(2,q\}$, hence $T'\cong B(2,q)$, as desired in this case.

If $p> 2$,  by Claim 1, we have $T'\cong T(q- 2, p-2, 1)$ or $T'\cong T(p-2, q-2, 1)=B(p,q)$. If $q=p,$ it is easy to see that
$T(q- 2, p-2, 1)\cong T(p-2, q-2, 1)=B(p,q),$ our result follows immediately in this subcase. Hence, it suffices to consider $q>p.$ In order to determine the
structure of $T'$,  it suffices to show that $F(T(p-2, q-2, 1))> F((q- 2, p- 2, 1)).$

Note that $T(q- 2, p- 2, 1)$ is obtained by identifying $u$ of $D(p- 1, q- 1)$ with a leaf of $P_3$, and $T(q- 2, p- 2, 1)$ is obtained by identifying $v$ of $D(q- 1, p- 1)$ with a leaf of $P_3$, where $u$ is a vertex of degree $p-1$ in $D(p- 1, q- 1)$, $v$ is a vertex of degree $q-1$ in $D(p- 1, q- 1)$.
Notice that
$$
  f_{D(p- 1, q- 1)}(u/v)= 2^{p- 2} < 2^{q- 2}=f_{D(p- 1, q- 1)}(v/u).
$$
So
$$
f_{D(p- 1, q- 1)}(u) < f_{D(p- 1, q- 1)}(v).
$$
Hence, by Corollary \ref{cor2.4} we have $F((q- 2, p- 2, 1))<F(T(p-2, q-2, 1))$.
Therefore, for any $T \in \mathscr{P}_n^{p,q} \setminus \{D(p, q)\}$, $F(T)\leq F(B(p, q)), q\ge p \ge 2$, with equality if and only if $T\cong B(p, q)$.

In order to complete the proof of Theorem 1.3 (ii), it suffices to show the following claim.
\begin{claim}
In the above situation, if $p\geq q\geq 2$ with $p+q=n$, then
\begin{equation*}
F(B(p,q))=\left\{ \begin{aligned}
&3 \cdot 2^{n-4}+3\cdot 2^{q-2}+2^{p-2}+n-1,     &q\geq p>2,   \\
&2^{n-2}+n+2,                         &p=2.
  \end{aligned} \right.
 \end{equation*}
\end{claim}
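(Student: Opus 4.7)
The plan is to compute $F(B(p,q))$ directly by picking a convenient vertex $v$ in $B(p,q)$ and applying the elementary decomposition
\[
F(B(p,q)) = f_{B(p,q)}(v) + F(B(p,q) - v),
\]
which holds because every subtree of $B(p,q)$ either contains $v$ or lies inside some component of $B(p,q) - v$. Because the paper defines $B(p,q)$ by two different recipes depending on whether $p>2$ or $p=2$, I will split the computation into these two cases. No minimality, lemma-of-switching, or ``$f_T(u)$ vs.\ $f_T(v)$'' comparison is needed here; the task is purely enumerative.

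\textbf{Case $q \ge p > 2$.} I unpack the structure: $B(p,q)$ has a central edge $uv$ with $\deg(u)=p-1$ (so $u$ carries $p-2$ pendant neighbours besides $v$), and $v$ has $q$ neighbours, namely $u$, a degree-$2$ vertex $w$ whose other neighbour $w'$ is a leaf, and $q-2$ pendants of $v$. Taking $v$ as pivot, a subtree through $v$ is determined by three independent choices: (a) any subset of the $q-2$ pendants of $v$ ($2^{q-2}$ options); (b) on the $u$-side, either omit $u$ or include $u$ together with any subset of its $p-2$ leaves ($1+2^{p-2}$ options); (c) on the $w$-side, pick one of $\emptyset,\{w\},\{w,w'\}$ ($3$ options). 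Multiplying,
\[
f_{B(p,q)}(v) \;=\; 3\cdot 2^{q-2}\bigl(1+2^{p-2}\bigr)\;=\;3\cdot 2^{q-2}+3\cdot 2^{n-4}.
\]
The complement $B(p,q)-v$ is a disjoint union of the star $K_{1,p-2}$ (at $u$), the edge $P_2=ww'$, and $q-2$ isolated vertices; a direct count on the star together with Lemma \ref{lem2.1} gives
\[
F(B(p,q)-v) \;=\; \bigl(p-2+2^{p-2}\bigr)+3+(q-2)\;=\;2^{p-2}+n-1.
\]
Adding these two expressions produces the first formula.

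\textbf{Case $p=2$.} Here $B(2,q)$ is $P_4=w_1w_2w_3w_4$ with $q-2$ pendants attached at $w_4$. Taking $v=w_4$, subtrees through $v$ factor as a choice of subset of the $q-2$ pendants times a choice of initial segment of the path $w_3 w_2 w_1$, yielding
\[
f_{B(2,q)}(v) \;=\; 2^{q-2}\cdot 4 \;=\; 2^{n-2}.
\]
The complement $B(2,q)-v$ is $P_3\sqcup (q-2)P_1$, which by Lemma \ref{lem2.1} contributes $\binom{4}{2}+(q-2)=n+2$ subtrees. Summing gives $F(B(2,q))=2^{n-2}+n+2$.

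The whole argument reduces to careful structural bookkeeping, so no genuine obstacle arises; the only point demanding care is to keep the $p=2$ case separate, since there $u$ degenerates to a leaf and the two definitions of $B(p,q)$ must not be conflated.
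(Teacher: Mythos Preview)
Your proof is correct and follows essentially the same direct-enumeration approach as the paper: for $p=2$ the two arguments are identical, and for $p>2$ the paper decomposes around both central vertices $u,v$ via $f(v*u)+f(v/u)+f(u/v)+F(T-u-v)$ while you pivot on $v$ alone, which is only a cosmetic regrouping of the same counts.
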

\noindent{\bf Proof of Claim 2}\ \
First consider $p=2$. Let $v$ be the vertex of degree $n-3$ in $B(2,n-2)$, hence
$F(B(2,n-2))=f_{B(2,n-2)}(v)+F(B(2,n-2)-v).$

Note that
$$
f_{B(2,n-2)}(v)=4\cdot 2^{n-2-2}=2^{n-2}
$$
and
$$
F(B(2,n-2)-v)=F(P_3\cup (n-4)P_1)={4\choose{2}}+n-4=n+2.
$$
Hence, by simple computation our result holds for $p=2$.

Now consider $p>2$. Let $v$ be the vertex of degree $q$ and $u$ the vertex of degree $p-1$ in $B(p,q)$. Note that

$$
F(B(p,q))=f_{B(p,q)}(v*u)+f_{B(p,q)}(v/u)+f_{B(p,q)}(u/v)+F(B(p,q)-v-u).
$$

On the other hand,
$$
f_{B(p,q)}(v*u)=3\cdot 2^{p-2+q-2}=3\cdot 2^{n-4},\ \ \ f_{B(p,q)}(v/u)=3\cdot 2^{q-2},\ \ \ f_{B(p,q)}(u/v)=2^{p-2}
$$
and
$$
F(B(p,q)-v-u)=F((p-2+q-2)P_1\cup P_2)=n-4+3=n-1.
$$

By simple calculation, our result also holds for $p>2.$ \qed

This completes the proof of Theorem 1.3 (ii).\vspace{2mm}

(iii)\ If $p=1$, it is easy to see that $\mathscr {T}_n^k=\{P_1(\lfloor\frac{n-1}{2})\rfloor,\lceil\frac{n-1}{2}\rceil\}$, our result follows immediately. On the other hand, if $p=q$ or $p=q-1$, it is easy to see that $P_n\in \mathscr{P}_n^{p,q}$, by Lemma \ref{lem2.1}, it is easy to see that $P_n=P_{2p-1}(1,1)$ or $P_n=P_{2p-1}(0,1),$ minimizes the total number of subtrees among $\mathscr{P}_n^{p,q}$. Hence, in what follows we consider $1<p< \lfloor\frac{n}{2}\rfloor$. In order to complete the proof, it suffices to show the following claim.
\begin{claim}
If $T$ minimizes the total number of subtrees in $\mathscr{P}_n^{p,q}, T\cong P_{2p-1}(a,b)$, where $a \geq b \geq 1$ and $a+b=n-2p+1$.
\end{claim}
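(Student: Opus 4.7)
The plan proceeds in two stages. Let $T \in \mathscr{P}_n^{p,q}$ minimize $F$, with bipartition $(V_1, V_2)$ of sizes $p \le q$, and fix a longest path $P = v_1 v_2 \cdots v_r$ of $T$. Its bipartition has parts of sizes $\lceil r/2 \rceil$ and $\lfloor r/2 \rfloor$, each contained in one of $V_1, V_2$, so $\lfloor r/2 \rfloor \le p$ and the diameter satisfies $d(T) = r - 1 \le 2p$. The first stage is to show $r = 2p+1$. Assume for contradiction $r \le 2p$ and locate a subtree $Z$ of $T$ hanging from some vertex $v_i$. Unplug $Z$ and reattach it at a vertex $v_j$ chosen in the same bipartition class as $v_i$ (so the $(p,q)$-bipartition is preserved) and among the endpoints of $P$ or their neighbors. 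Applying Lemma \ref{lem2.10} to $T_0 := T \setminus V(Z)$ along $P$, the profile $f_{T_0}$ is strictly monotone on either side of its unique peak, so a case analysis on the parity of $r$ and the class of $v_i$ yields a valid $v_j$ with $f_{T_0}(v_j) < f_{T_0}(v_i)$. Corollary \ref{cor2.4} (or the pendant identity $F(T_0 + v z) = 1 + f_{T_0}(v) + F(T_0)$ when $Z$ is a single vertex) then gives $F(T^\ast) < F(T)$ for the modified tree $T^\ast \in \mathscr{P}_n^{p,q}$, contradicting minimality; moreover, choosing $v_j$ as an endpoint of $P$ strictly lengthens the longest path, so iteration forces $r = 2p+1$.

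With $r = 2p+1$ in hand, the smaller side of the path-bipartition has size exactly $p$, so $V_1 = \{v_2, v_4, \ldots, v_{2p}\}$ lies entirely on $P$. Every off-path vertex then lies in $V_2$ with all of its neighbors in $V_1 \subseteq V(P)$; a second such neighbor would close a cycle, so each off-path vertex is a pendant attached to some $v_{2j}$. Suppose toward a contradiction that a pendant $z$ is attached at $v_{2j}$ with $2 \le j \le p - 1$. Since $v_2, v_{2j}, v_{2p}$ all lie in $V_1$, moving $z$ among these three vertices preserves the bipartition. Applying Lemma \ref{lem2.10} to $T_0 := T - z$ along $P$ locates the unique peak $v_t$ of $f_{T_0}$; in the case $t \ge 2j$ one has $f_{T_0}(v_2) < f_{T_0}(v_{2j})$, and in the case $t < 2j$ one has $f_{T_0}(v_{2p}) < f_{T_0}(v_{2j})$. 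The pendant identity then shows that reattaching $z$ to $v_2$ or $v_{2p}$ accordingly strictly decreases $F$, contradicting the minimality of $T$. Hence every off-path vertex is a pendant at $v_2$ or $v_{2p}$; setting $a = \deg_T(v_2) - 1$ and $b = \deg_T(v_{2p}) - 1$ (counting $v_1$ and $v_{2p+1}$ as pendants at $v_2$ and $v_{2p}$), we obtain $a, b \ge 1$, $a + b = n - 2p + 1$, and $T \cong P_{2p-1}(a, b)$.

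The main obstacle is the first stage. Unlike the analogous step in Claim 1 of the proof of Theorem 1.1(ii), where destinations from $\{v_2, v_{r-1}\}$ alone suffice, the $(p,q)$-bipartition constraint here restricts the destination to the same bipartition class as the source, so depending on the parity of $r$ and the class of $v_i$ one must choose the destination between $\{v_2, v_{r-1}\}$ and $\{v_1, v_r\}$. In each such case the strict monotonicity in Lemma \ref{lem2.10}, combined with the freedom provided by these four candidates, supplies a destination with strictly smaller $f_{T_0}$-value lying in the allowed class; after finitely many applications of the resulting $F$-decreasing transformation, one reaches the required contradiction.
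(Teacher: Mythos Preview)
Your Stage 2 is correct and is in fact cleaner than the paper's treatment: once $r=2p+1$, the observation that the $p$-element colour class must coincide with $\{v_2,v_4,\dots,v_{2p}\}$ forces every off-path vertex to be a pendant at some even-indexed $v_{2j}$, and your Lemma~\ref{lem2.10} comparison between $v_{2j}$ and $v_2$ or $v_{2p}$ goes through because $4\le 2j\le 2p-2$ keeps $v_{2j}$ strictly interior.

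The gap is in Stage 1. Your key assertion is that, for a branch $Z$ hanging at $v_i$, the case analysis on parities always produces a same-class destination $v_j\in\{v_1,v_2,v_{r-1},v_r\}$ with $f_{T_0}(v_j)<f_{T_0}(v_i)$. This fails precisely when $i\in\{2,r-1\}$, i.e.\ when $T$ is already a dumbbell but with $r\le 2p$. Take $r=6$ with one pendant $z$ at $v_2$ and three pendants at $v_5$; then $p=4$, $r=6\le 2p$, and for $T_0=T-z$ the only same-class destination for $v_2$ is $v_6$, yet $f_{T_0}(v_2)=38<41=f_{T_0}(v_6)$, so the move \emph{increases} $F$. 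Lemma~\ref{lem2.10} gives you $f_{T_0}(v_1)<f_{T_0}(v_2)$ here, but $v_1$ is in the wrong class, and it says nothing about $f_{T_0}(v_2)$ versus $f_{T_0}(v_r)$ when the peak sits on the far side. (Moving from the \emph{heavy} end $v_5$ does work in this example, but your sketch gives no rule for selecting it, and proving that the heavy side always succeeds is itself a non-trivial step.) The closing remark about ``iteration'' is also off: a single $F$-decreasing move in $\mathscr{P}_n^{p,q}$ already contradicts minimality.

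The paper avoids this by reversing your two stages. It first runs exactly your Lemma~\ref{lem2.10} argument, but only for $3\le i\le r-2$, where both of $\{v_1,v_2\}$ (resp.\ $\{v_{r-1},v_r\}$) lie strictly below $v_i$, so one of each pair is available in the required class; this yields the dumbbell shape $T\cong P_{r-2}(a,b)$. Only then, assuming $r<2p+1$ (hence $r$ even), it applies Lemma~\ref{lem2.2} --- not Lemma~\ref{lem2.10} --- to the tree $\hat T=T-\{v_r,u_2,\dots,u_b\}$ to obtain $f_{\hat T}(v_{r-1})<f_{\hat T}(v_1)$, and relocates the whole block at $v_{r-1}$ to $v_1$ via Corollary~\ref{cor2.4}. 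Your Stage 1 can be repaired along these lines, but as written the claimed case analysis does not cover the dumbbell-with-short-diameter situation.
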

\noindent{\bf Proof of Claim 3}\ \
If $1<p< \lfloor\frac{n}{2}\rfloor$, by (i) we know that $T \not \cong D(p,q)$, so diam$(T) \ge 3$. If diam$(T)=3$, the claim follows immediately. Hence in what follows we consider the trees whose diameter is larger than 3.
Suppose that $P_r=v_1\ldots v_r\, (r \geq 5)$ is one of the longest path in $T$, we are to show that
$d_T(v_3)=d_T(v_4)=\cdots =d_T(v_{r-2})=2$ and $r=2p+1$. First assume to the contrary that there exists $v\in \{v_3, v_4, \ldots,v_{r-2}\}$ such that
$d_T(v)\ge 3.$ Let
$$
i=\min\{j:\ d_T(v_j)\ge 3,\ \ \ 3\leq j \leq r-2\},\ \ \ \ N_T(v_i)=\{v_{i-1}, v_{i+1}, z_1,z_2,\ldots, z_s\},\ \  s\geq1.
$$
Let $T_0$ be the component that contains $v_i$ in $T-\{z_1,z_2,\ldots, z_s\}$. By Lemma \ref{lem2.10}, there exists $v_t\in V_{P_r}$ such that
$$
f_{T_0}(v_1)<\cdots < f_{T_0}(v_{t-1})< f_{T_0}(v_t)\geq f_{T_0}(v_{t+1})> \cdots >f_{T_0}(v_r).
$$
If $t< i,$ then we have $f_{T_0}(v_i)>f_{T_0}(v_{r-1})>f_{T_0}(v_r)$. If $v_i$ and $v_{r-1}$ are in the same part,
By Corollary \ref{cor2.4}, we have
\[\label{eq:5.14}
 F(T) > F(T'),
\]
where
$$T'=T-v_iz_1-\cdots -v_iz_s+v_{r-1}z_1+\cdots+v_{r-1}z_s,\ \ \ \ T'\in \mathscr{P}_n^{p,q}$$
otherwise, $v_i$ and $v_r$ are in the same part, we have
 \[\label{eq:5.15}
 F(T) > F(T''),
\]
where
$$T''=T-v_iz_1-\cdots -v_iz_s+v_rz_1+\cdots+v_rz_s,\ \ \ \ T''\in \mathscr{P}_n^{p,q}.$$
If $t \ge i$, repeat as above, we have a $T'''\in \mathscr{P}_n^{p,q}$ such that
\[\label{eq:5.16}
 F(T) > F(T'''),\ \ \ \ T'''\in \mathscr{P}_n^{p,q}.
\]
Hence, (\ref{eq:5.14})-(\ref{eq:5.16}) are contradictions to the choice of $T$. So we have $T\cong P_r(a,b)$.

On the other hand, since $T\in \mathscr{P}_n^{p,q}$ with $1<p<\lfloor\frac{n}{2}\rfloor$, it is easy to see that $r \le 2p+1$.
If $r< 2p+1$, it means that $v_1$ and $v_r$ are in different parts (otherwise we have $p< \lceil\frac{r-2}{2}\rceil$ or $q< \lceil\frac{r-2}{2}\rceil$). As $a \ge b$, we have $v_1 \in V_2$ and $v_r\in V_1$, where $V_1$ and $V_2$ are two parts of $V_T$ with $|V_1|=p, |V_2|=q$. Assume that $N_T(v_2)=\{v_3,v_1, w_2,\ldots, w_{a}\}, N_T(v_{r-1})=\{v_{r-2},v_r, u_2,\ldots, u_{b}\}$. Let $\hat{T}=T-\{v_r, u_2,\ldots, u_b\}$. By Lemma \ref{lem2.2}, we have $f_{\hat{T}}(v_{r-1}) < f_{\hat{T}}(v_1)$, by Corollary \ref{cor2.4} we have
\[\label{eq:5.18}
 F(T) > F(\tilde{T}),
\]
where
$$
  \tilde{T}=T-v_{r-1}v_r-v_{r-1}u_2-\cdots -v_ru_b+v_1v_r+v_1u_2+\cdots+v_1u_b.
$$
As $v_{r-1}, v_1\in V_2$, $\tilde{T}\in \mathscr{P}_n^{p,q}$. Hence, (\ref{eq:5.18}) is a contradiction to the choice of $T$. So we have $T\cong P_{2p-1}(a,b)$.
\qed

By Claims 2 in the proof of Theorem 1.1(ii), together with Claim 3, $T\cong P_{2p-1}(\lfloor\frac{n-2p+1}{2}\rfloor,\lceil\frac{n-2p+1}{2}\rceil)$, as desired.
\qed
\begin{remark} In view of Eq.(5.2), we have
$$
F(D(p,q))-F(D(p-1,q+1))=2^{p-2}-2^{q-1}<0
$$
for $q\geq p>1$. Hence, we have
\[
F(D(p,q))< F(D(p-1,q+1))< \cdots< F(D(1,n-1))=F(K_{1,n-1}),
\]
for $q\geq p>1$. Note that $D(p,q)$ maximizes the total number of subtrees among $\mathscr{P}_n^{p,q}$, hence in view of (5.9) and Theorem 1.3(i), the following corollary holds immediately.
\end{remark}
\begin{cor}[\cite{21}]
The star $K_{1,n-1}$ has $2^{n-1}+n-1$ subtrees, more than any other tree on $n$ vertices.
\end{cor}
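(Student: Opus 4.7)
The plan is to derive the corollary by assembling two already-established facts: Theorem 1.3(i), which shows that within each bipartition class $\mathscr{P}_n^{p,q}$ the unique maximizer of $F$ is the double star $D(p,q)$ with closed form $F(D(p,q)) = 2^{n-2} + 2^{p-1} + 2^{q-1} + n - 2$, and the strict monotonicity $F(D(p,q)) < F(D(p-1, q+1))$ for $q \ge p > 1$ recorded in Eq.~(5.9) of Remark~1. The whole proof amounts to combining these two ingredients with the trivial observation that every tree is bipartite.

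First I would note that any tree $T$ on $n$ vertices is bipartite, and so belongs to $\mathscr{P}_n^{p,q}$ for some integers $1 \le p \le q$ with $p+q = n$. Theorem~1.3(i) then gives $F(T) \le F(D(p,q))$ with equality if and only if $T \cong D(p,q)$, so it suffices to maximize $F(D(p,q))$ as $(p,q)$ ranges over all bipartitions.

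Next I would iterate Eq.~(5.9) to obtain the chain
\begin{equation*}
F(D(p,q)) < F(D(p-1, q+1)) < \cdots < F(D(1, n-1)),
\end{equation*}
valid for every $q \ge p > 1$. This shows that the maximum of $F(D(p,q))$ is attained only at $(p,q) = (1, n-1)$. Since $D(1, n-1) \cong K_{1,n-1}$, combining with the previous paragraph yields $F(T) \le F(K_{1,n-1})$ for every $n$-vertex tree, with equality if and only if $T \cong K_{1,n-1}$.

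Finally, a direct substitution $p=1,\ q=n-1$ into the closed form of Theorem~1.3(i) gives $F(K_{1,n-1}) = 2^{n-2} + 2^{0} + 2^{n-2} + n - 2 = 2^{n-1} + n - 1$, matching the claimed count. There is no real obstacle here: the corollary is a clean consequence of the already-proved maximality of $D(p,q)$ inside each bipartition class plus the (routinely verified) monotonicity of $F(D(p,q))$ in $p$.
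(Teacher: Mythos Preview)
Your proposal is correct and follows essentially the same approach as the paper: both combine Theorem~1.3(i) (that $D(p,q)$ maximizes $F$ within $\mathscr{P}_n^{p,q}$) with the strict monotonicity chain (5.9) to conclude that $K_{1,n-1}=D(1,n-1)$ is the overall maximizer. You are slightly more explicit in spelling out that every tree lies in some $\mathscr{P}_n^{p,q}$ and in verifying the numerical value $2^{n-1}+n-1$, but the argument is the same.
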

\section{\normalsize Proof of Theorem 1.4}\setcounter{equation}{0}
In this section we shall determine the sharp lower bound on the total number of subtrees contained in a tree among $\mathscr{A}_n^q$.\vspace{2mm}

\noindent{\bf Proof of Theorem 1.4}\
First we characterize the structure of the tree, say $T$, minimizing the total number of subtrees in $\mathscr{A}_n^q.$
In order to do so, it suffices to show that the diameter of $T$ is $n+1$. Without loss of generality, we assume one of the longest paths in $T$ is $P_{r+1}=v_0 v_1\ldots v_r$. If $r=n+1$, our result holds obviously. So in what follows, we assume that $r\le n.$

For convenience, let
$$N_T(v_i)\setminus\{v_{i-1}, v_{i+1}\}=\{v_{i_1},v_{i_2},\ldots,v_{i_{q-2}}\},\ \ \ \ \ \  i= 1, 2, \ldots, r-1.$$
Hence,
\[
\bigcup_{i=1}^{r-1}N_T(v_i)=\{v_{1_1},v_{1_2},\ldots,v_{1_{q-2}},\ldots,v_{i_1},v_{i_2},\ldots,v_{i_{q-2}}, v_{r_1},v_{r_2},\ldots,v_{r_{q-2}}\}
\]
In fact, $(\bigcup_{i=1}^{r-1}N_T(v_i),\prec)$ is a total ordering set, where $\prec$ is defined as following: for any $v_{i_j},\, v_{t_s}\in \bigcup_{i=1}^{r-1}N_T(v_i)$, we call $v_{i_j}\prec v_{t_s}$ if $i<t$ or $i=t, j<s.$ Hence, we can order the elements in (6.1) as following:
$$
v_{1_1}\prec v_{1_2}\prec \cdots\prec v_{1_{q-2}}\prec \cdots\prec v_{i_1}\prec v_{i_2}\prec \cdots\prec v_{i_{q-2}}\prec v_{r_1}\prec v_{r_2}\prec \cdots\prec v_{r_{q-2}}.
$$
Note that $r<n+1$, hence there must exists non-pendant vertex in $\bigcup_{i=1}^{r-1}N_T(v_i).$ Choose the minimal element, say $v_{l_j}$, under the order $\prec$ such that it is a non-pendant vertex. Note that $P_{r+1}$ is the longest path in $T$, hence
\[
1 <l <r-1.
\]
Thus, we can partition $T$ into two subtrees, say $S$ and $T_0$, such that $E_T=E_S\cup E_{T_0}, V_T=V_S\cup V_{T_0}$ and $V_S\cap V_{T_0}=\{v_{l_j}\};$ see Fig. 7. For convenience, let $N_{T_0}(v_{l_j})=\{v_0,w_1,w_2,\ldots,w_{q-1}\}.$
\begin{figure}[h!]
\begin{center}
  \psfrag{0}{$v_0$}\psfrag{1}{$v_1$}\psfrag{2}{$v_2$}\psfrag{s}{$S$}
  \psfrag{l}{$v_l$}\psfrag{n}{$v_{r-1}$}\psfrag{c}{$v_r$}
  \psfrag{m}{$v_{l_j}$}\psfrag{e}{$T_0$}
\psfrag{d}{$q-2$}
  \includegraphics[width=70mm]{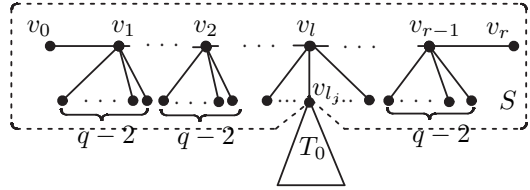}\\
  \caption{An $q$-arc tree $T$.}
\end{center}
\end{figure}

Now we are in the position to apply Lemma \ref{lem2.2} in the following setting:
$$
x\leftarrow v_0, \ x_i\leftarrow v_i, \ x_{\lfloor \frac{l}{2}\rfloor}\leftarrow v_{\lfloor \frac{l}{2}\rfloor}
$$
($z\leftarrow v_{\lceil \frac{l}{2}\rceil}$ if $l$ is odd)
$$
y\leftarrow v_{l_j}, \ y_i\leftarrow v_{l+1-i}, \ y_{\lfloor \frac{l}{2}\rfloor}\leftarrow v_{l+1-\lfloor \frac{l}{2}\rfloor}
$$
for $i = 1, 2, \ldots , \lfloor \frac{l}{2}\rfloor$.

Then
\begin{eqnarray*}
  X_i &=& K_{1,q-2},\ \ \ \ i = 1, 2, \ldots , \lfloor \frac{l}{2}\rfloor, \\
 Y_i &=& K_{1,q-2},\ \ \ \ i = 2, 3,\ldots , \lfloor \frac{l}{2}\rfloor
\end{eqnarray*}
and $Y_1$ is the component in $T-v_{l-1}v_l-v_lv_{l_j}$ which contains $v_l$. 
By direct calculation, it is easy to see that
$$
f_{X_i}(x_i)= f_{Y_i}(y_i)=2^{q-2}, \ \ \ \ i = 2, \ldots , \lfloor \frac{l}{2}\rfloor.
$$
On the other hand, in veiw of (6.2) we know that $Y_1\not\cong K_{1,q-2}$ and $f_{Y_1}(y_1)>2^{q-2},$ while
$f_{X_1}(x_1)=2^{q-2}.$ Therefore, we have
$$
f_{X_1}(x_1)<f_{Y_1}(y_1).
$$
By Lemma \ref{lem2.2}, we have
$$f_S(x) < f_S(y),$$
where $S$ is defined as above. Therefore, by Corollary \ref{cor2.4}, we have
\[
F(T')< F(T),
\]
where
$$
 T'=T-\{v_{l_j}w_1, v_{l_j}w_2,\ldots, v_{l_j}w_{q-1}\}+\{v_0w_1, v_0w_2,\ldots, v_0w_{q-1}\}.
$$
Inequality (6.3) is a contradiction to the choice of $T$. Hence, we obtain $r= n+1$, i.e., $T\cong \hat{T}_n^q,$ as desired.

In order to complete the proof of Theorem 1.4, it suffices to determine $F(\hat{T}_n^q)$. Choose one of the longest paths in
$\hat{T}_n^q$ and denote it by $P$. Let $v_0$ be one of its end-vertices. Then denote the unique neighbour of $v_0$ by $v_1$; see Fig. 3. Hence,
we have
\begin{eqnarray*}
      f_{\Hat{T}_n^q}(v_1)&=&2^{q-1}\left(1+2^{q-2}+\cdots+2^{(n-2)(q-2)}+ 2^{(n-1)(q-2)}+2^{(n-1)(q-2)}\right),\\
  f_{\Hat{T}_{n-1}^q}(v_1)&=&1\left(1+2^{q-2}+\cdots+2^{(n-2)(q-2)}+ 2^{(n-1)(q-2)}+2^{(n-1)(q-2)}\right).
\end{eqnarray*}
Note that
\begin{eqnarray*}
F(\Hat{T}_n^q)&=&f_{\Hat{T}_n^q}(v_1)+F(\Hat{T}_n^q-v_1) \\
&=&f_{\Hat{T}_n^q}(v_1)+F((q-1)P_1\cup (\Hat{T}_{n-1}^q-v_1))\\
&=&f_{\Hat{T}_n^q}(v_1)+F(\Hat{T}_{n-1}^q-v_1)+q-1\\
&=&f_{\Hat{T}_n^q}(v_1)+F(\Hat{T}_{n-1}^q)-f_{\Hat{T}_{n-1}^q}(v_1)+q-1.
\end{eqnarray*}
This gives
\begin{eqnarray*}
F(\Hat{T}_n^q)-F(\Hat{T}_{n-1}^q)&=&f_{\Hat{T}_n^q}(v_1)-f_{\Hat{T}_{n-1}^q}(v_1)+q-1\\
&=&q-1+(2^{q-1}-1)\left(1+2^{q-2}+\cdots+2^{(n-2)(q-2)}+ 2^{(n-1)(q-2)}+2^{(n-1)(q-2)}\right)\\
&=&q-1+(2^{q-1}-1)\left(\frac{2^{n(q-2)}-1}{2^{q-2}-1}+2^{(n-1)(q-2)}\right)\\
&=&q-1-\frac{2^{q-1}-1}{2^{q-2}-1}+\frac{2^{(n-1)(q-2)}(2^{q-1}-1)^2}{2^{q-2}-1}.
\end{eqnarray*}
As $F(\Hat{T}_1^q)=F(K_{1,q})=2^q+q$, we have
\begin{eqnarray*}
F(\Hat{T}_n^q)&=&F(B_1)+\sum_{i=2}^n\left[q-3-\frac{1}{2^{q-2}-1}+\frac{2^{(i-1)(q-2)}(2^{q-1}-1)^2}{2^{q-2}-1}\right]\\
&=&(2^q+q)+(n-1)(q-3)-\frac{n-1}{2^{q-2}-1}+\frac{(2^{q-1}-1)^2}{2^{q-2}-1}\sum_{i=2}^n2^{(i-1)(q-2)}\\
&=&\frac{2^{q-2}(2^{q-1}-1)^2(2^{(n-1)(q-2)}-1)}{(2^{q-2}-1)^2}-\frac{n-1}{2^{q-2}-1}+2^q+nq-3n+3.
\end{eqnarray*}
This completes the proof.
\qed

\begin{remark} In particular, let $q=3$ in Theorem 1.4, we can obtain that just the $n$-leaf binary caterpillar tree
minimizes the total number of subtrees among $n$-leaf binary trees, which is obtained by
Sz\'ekely and Wang in \cite{21}.
\end{remark}
\begin{cor}[\cite{21}]
For any $n \ge 2$, precisely the $n$-leaf binary caterpillar tree $\hat{T}_{n-2}^3,$ which has $2^{n+1}+
2^{n-2} - n- 4$ subtrees, minimizes the number of subtrees among $n$-leaf binary trees.
\end{cor}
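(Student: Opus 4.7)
The plan is to obtain this as a direct specialisation of Theorem 1.4 at $q=3$. First I would check that the class of $n$-leaf binary trees coincides exactly with $\mathscr{A}_{n-2}^3$. Indeed, in any binary tree every non-leaf vertex has degree $3$, so if $T$ has $n$ leaves and $m$ non-leaf vertices, the handshake lemma gives $n+3m=2(n+m-1)$, whence $m=n-2$; conversely the definition of $\mathscr{A}_m^3$ (with $(q-2)m+2=m+2$ leaves) shows $\mathscr{A}_{n-2}^3$ is exactly the set of $n$-leaf binary trees. Theorem 1.4 applied with $n \leftarrow n-2$ and $q=3$ then immediately singles out $\hat{T}_{n-2}^3$ as the unique minimiser. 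Inspecting the construction of $\hat{T}_m^q$ with $q=3$, one attaches a single pendant edge at each of the $m$ internal vertices of the spine $P_{m+2}$, which for $m=n-2$ is precisely the $n$-leaf binary caterpillar tree.

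Next I would specialise the closed form of Theorem 1.4 to $q=3$ and simplify. The constants collapse as $2^{q-2}=2$, $2^{q-2}-1=1$ and $2^{q-1}-1=3$, and the exponent $(n-1)(q-2)$ becomes $n-3$ after the substitution $n\leftarrow n-2$. So the leading term reduces to $2\cdot 9\cdot(2^{n-3}-1)=18\cdot 2^{n-3}-18$, while the remaining terms give
\[
-\frac{n-3}{1}+2^3+(n-2)\cdot 3-3(n-2)+3 = -(n-3)+8+3 = -n+14.
\]
Using $18\cdot 2^{n-3}=9\cdot 2^{n-2}=(8+1)\cdot 2^{n-2}=2^{n+1}+2^{n-2}$, the grand total is
\[
2^{n+1}+2^{n-2}-18-n+14 \;=\; 2^{n+1}+2^{n-2}-n-4,
\]
which is exactly the count claimed in the corollary.

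Finally I would dispose of the degenerate case $n=2$, where Theorem 1.4's hypothesis $n\ge 1$ on the non-leaf count would be satisfied only vacuously. Here $\mathscr{A}_0^3=\{P_2\}$ trivially, and $F(K_2)=3$ agrees with $2^{3}+2^{0}-2-4=3$. No step is genuinely hard: once the bijection $n$-leaf binary trees $\leftrightarrow \mathscr{A}_{n-2}^3$ is noted, the corollary is a purely algebraic book-keeping derivation from Theorem 1.4, and the only thing to be careful about is the exponent shift and the small-$n$ boundary verification.
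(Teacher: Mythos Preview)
Your proposal is correct and follows exactly the approach the paper takes: the paper's Remark preceding the corollary simply says to set $q=3$ in Theorem~1.4, and you carry this out in full detail, including the identification $\{n\text{-leaf binary trees}\}=\mathscr{A}_{n-2}^3$, the algebraic reduction of the closed form, and the boundary check at $n=2$ that the paper leaves implicit.
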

\section{\normalsize Concluding remarks}
In view of Theorem 1.3, we conjecture that one may show the counterparts of these results for the Wiener index among the $n$-vertex trees with a given $(p,q)$-bipartition. On the other hand, for the Wiener index, sharp upper and lower bounds of trees with given degree sequence are determined; see \cite{18,26,27}. It is natural for us to determine sharp upper and lower bounds on the total number of subtrees of a tree with given degree sequence. It is difficult but interesting and it is still open.


\end{document}